\newtheorem{theorem}{Theorem}[section]
\newtheorem{definition}[theorem]{Definition}
\newtheorem{lemma}[theorem]{Lemma}
\newtheorem{proposition}[theorem]{Proposition}
\newenvironment{proof}[1][Proof]{\noindent \textbf{#1.} }{\  $\Box$}
\numberwithin{equation}{section}
\begin{document}

\title{\textbf{Anticipated mean-field backward stochastic differential equations with jumps}}
\author{Tao HAO\thanks{%
School of Statistics, Shandong University of Finance and Economics, Jinan 250014, P. R. China.
haotao2012@hotmail.com. Research supported by National Natural Science Foundation of China
(Grant Nos. 71671104,11871309,11801315,71803097), the Ministry of Education of Humanities and Social Science Project (Grant No. 16YJA910003),
Natural Science Foundation of Shandong Province (No. ZR2018QA001),
A Project of Shandong  Province Higher Educational Science and Technology Program (Grant Nos. J17KA162, J17KA163),
and Incubation Group Project of Financial Statistics and Risk Management of SDUFE. }
 }
\maketitle
\date{}

\begin{abstract}
In this paper we prove the existence and uniqueness theorem, comparison theorem of a class of anticipated
mean-field backward stochastic differential equations with jumps.
\end{abstract}

\textbf{Key words}: Anticipated mean-field BSDE; jump;
existence and uniqueness theorem; comparison theorem

\textbf{MSC-classification}: 60H10\\\\


\section{{\protect \large {Introduction}}}

Stochastic delay differential equations (SDDEs) can be met frequently in the fields of Finance, Economics and Physics. Recently,
stochastic optimal control problems for SDDEs have attracted an increasing attention. We refer to
Chen, Huang \cite{CH},
Chen, Wu \cite{CW},
Elsanosi, $\phi$ksendal, Sulem \cite{EOS},
Guatteri, Masiero \cite{GuM} for the maximum principle, and refer to
Fuhrman, Masiero, Tessitore \cite{FMT}, Gozzi, Masiero \cite{GoM1},\ \cite{GoM2} for the dynamic programming principle
and the probability interpretation of related Hamilton-Jacobi-Bellmam equations. As we know,
when adopting dual method to investigate the  necessary conditions of optimality for control systems with delay,
 the dual equations of the first-order variational equations are anticipated backward stochastic differential equations (BSDEs), which were first considered by Peng and Yang \cite{PY}
in 2009. Later, many scholars dedicated themselves to studying this kind of equations, such as Yang, Elliot \cite{YE},
Lu,\ Ren \cite{LR}.

On the other hand, mean-field BSDEs as the limit state of characterizing the asymptotic behavior of large stochastic particle systems
with mean-field interaction when the size of the system becomes very large, have also received a lot of attentions, for example,
Buckdahn, Djehiche, Li, Peng \cite{BDLP}, Buckdahn, Li, Peng \cite{BLP}. In particular, a recent series of works of  Lions \cite{Lions}
(or see the note edited by Cardaliaguet \cite{Car}) gave a huge impulse to investigate the general mean-field BSDEs, that is,
the coefficients depending on the law of the solution, not the expectation, see Buckdahn, Li, Peng, Rainer \cite{BLPR}, Hao, Li \cite{HL},
Li \cite{Li}, Agram \cite{A} and so on.

In this paper we are interested in the following general anticipated  mean-field BSDE with jumps:
\begin{equation}\label{equ 3.1}
\left\{
   \begin{aligned}
    -dY_t&=f(t,Y_t,Z_t,\int_G K_t(e)l(e)\lambda(de),A_t,B_t,C_t,\overline{A}_t,\overline{B}_t,\overline{C}_t,P_{\Pi_t})dt\\
         &\quad-Z_tdW_t-\int_GK_t(e)N_\lambda(de,dt),\\
      Y_t&=\varphi_t,\ Z_t=\phi_t,\ K_t=\psi_t(\cdot),\ t\in[T,T+M],
    \end{aligned}
   \right.
   \end{equation}
where
$$
\begin{aligned}
A_t&=Y_{t+\delta_1(t)},\ B_t=Z_{t+\delta_2(t)},\
C_t=\int_GK_{t+\delta_3(t)}(e)l(e)\lambda(de),\\
\overline{A}_t&=\int_0^{\delta_1(t)}e^{-\rho s}Y_{t+s}ds,\
\overline{B}_t=\int_0^{\delta_2(t)}e^{-\rho s}Z_{t+s}ds,\\
\overline{C}_t&=\int_0^{\delta_3(t)} e^{-\rho s}\int_GK_{t+s}(e)l(e)\lambda(de)ds,\ \rho>0,\
 \Pi_t=(Y_t,Z_t,\int_GK_t(e)l(e)\lambda(de)),
\end{aligned}
$$
$\varphi,\ \phi,\ \psi$ are given functions on $[T,T+M]$,
$W$ is a $d$-dimensional Brownian motion, $N_\lambda$ is a Poisson martingale measure, $P_\xi=P\circ \xi^{-1}$
is the law (or, called distribution) of  random variable $\xi\in L^2(\Omega,\mathcal{F},P)$, the mapping $l(\cdot):G\rightarrow\mathbb{R}$
satisfies for some given constant $C>0$, $0<l(e)\leq C(1\wedge|e|)$.
Here $A_t$ and $\overline{A}_t$ can be
regarded as the counterparts of the one point delay and the average delay in the corresponding mean-field SDDE with jumps.
We may call them the one point anticipated term and the average anticipated term, respectively.
$B_t,\overline{B}_t,C_t, \overline{C}_t$ can be understood similarly.
We proved the existence and uniqueness theorem, comparison theorem for one-dimensional anticipated mean-field BSDEs with jumps.

The motivation comes on the one hand from the rapid development of the theory of mean-field BSDEs, on the other hand
from the necessary of  studying the optimal control problems driven by mean-field SDDEs with jumps or anticipated mean-field BSDEs with jumps.

Compared with the spermic work of Peng and Yang \cite{PY}, the potential obstacle of this paper lies in involving the mean-field term and jump term, which means that we need more subtle calculation, see the proof of Lemma \ref{le 4.1}.

This paper is organized as follows. In Section 2, we recall the notion of the derivative in the Wasserstein space
and some usual functional spaces. Section 3
is devoted to showing the existence and uniqueness theorem. Comparison theorem is supplied in Section 4.

\section{{\protect \large {Preliminaries}}}

The differentiability of a function defined on $\mathcal{P}_2(\mathbb{R}^d)$ and some usual spaces are introduced in this section.

\subsection{ Derivative in the Wasserstein Space}
Let $\mathcal{P}_2(\mathbb{R}^d)$ be the space of all the probability measures on $(\mathbb{R}^d,\mathcal{B}(\mathbb{R}^d))$
with finite second-order moment, which is endowed with the 2-Wasserstein's distance $W_2$:
$$
W_2(\nu_1,\nu_2):=\inf\Big\{
\big(\int_{\mathbb{R}^{2d}}|a_1-a_2|^2\pi(a_1,a_2)\big)^\frac{1}{2},\ \pi\in\mathcal{P}_2(\mathbb{R}^{2d})\
\text{with\ marginals}\ \nu_1\ \text{and}\ \nu_2\Big\}.
$$
By $<\cdot,\cdot>$ we denote the ``dual product" on $L^2(\mathcal{F};\mathbb{R}^d)$,  and by
$\delta_\theta$ the Direc measure at $\theta$.

Let us now recall the notion of the differentiability of a function $\varphi:\mathcal{P}_2(\mathbb{R}^d)\rightarrow \mathbb{R}$
in $\nu\in\mathcal{P}_2(\mathbb{R}^d)$. Let $(\Omega,\mathcal{F},P)$ be a ``rich enough" space,
i.e., for each $\nu\in\mathcal{P}_2(\mathbb{R}^d)$, there exists a random variable $\xi\in L^2(\mathcal{F};\mathbb{R}^d)$ such
that $\nu=P_\xi$.
\begin{definition}(see \cite{Lions})
For $\xi_0\in L^2(\mathcal{F};\mathbb{R}^d)$, we call the function $\varphi:\mathcal{P}_2(\mathbb{R}^d)\rightarrow \mathbb{R}$
is differentiable at $P_{\xi_0}$, if the ``lifted" function $\varphi^{\#}(\xi):=\varphi(P_\xi),\ \forall\xi\in L^2(\mathcal{F};\mathbb{R}^d)$
is differentiable at $\xi_0$ in Fr\'{e}chet sense.
\end{definition}
This means that there exists a continuous linear mapping $D\varphi^{\#}(\xi_0):L^2(\mathcal{F};\mathbb{R}^d)\rightarrow\mathbb{R}$
such that for $\zeta\in L^2(\mathcal{F};\mathbb{R}^d)$,
\begin{equation}\label{equ 2.1}
\varphi^{\#}(\xi_0+\zeta)-\varphi^{\#}(\xi_0)=D\varphi^{\#}(\xi_0)(\zeta)+o(||\zeta||_{L^2}).
\end{equation}
Riesz' Representation Theorem allows to show that there exists an $\eta_0\in L^2(\mathcal{F};\mathbb{R}^d)$ such that
 $$ D\varphi^{\#}(\xi_0)(\zeta)=<\eta_0,\zeta>. $$
In \cite{Car} Cardaliaguet proved the existence of a Borel measure function $h$ depending on the law of $\xi_0$, not on the random
variable $\xi_0$ itself, such that $\eta_0=h(\xi_0)$. Consequently, (\ref{equ 2.1}) can be
described as
\begin{equation}\label{equ 2.2}
\varphi(P_{\xi_0+\zeta})-\varphi(P_{\xi_0})=<h(\xi_0),\zeta>+o(||\zeta||_{L^2}),\ \zeta\in L^2(\mathcal{F};\mathbb{R}^d).
\end{equation}
We call $\partial_\nu\varphi(P_{\xi_0};a)=h(a),\ a\in \mathbb{R}^d$, the derivative of $\varphi:\mathcal{P}_2(\mathbb{R}^d)\rightarrow\mathbb{R}$
at $P_{\xi_0}$. It is
easy to see $D\varphi^{\#}(\xi_0)=h(\xi_0)=\partial_\nu\varphi(P_{\xi_0};\xi_0)$.

In this paper for convenience we assume all the functions $\varphi^{\#}:L^2(\mathcal{F};\mathbb{R}^d)\rightarrow\mathbb{R}$
are Fr\'{e}chet differential over the whole space $L^2(\mathcal{F};\mathbb{R}^d)$, which naturally guarantees the corresponding functions
$\varphi:\mathcal{P}_2(\mathbb{R}^d)\rightarrow \mathbb{R}$ are differentiable in all probability measures of
$\mathcal{P}_2(\mathbb{R}^d)$. Note that in this situation
$\partial_\nu\varphi(P_\xi;a),\ \xi\in L^2(\mathcal{F};\mathbb{R}^d),\ a\in\mathbb{R}^d$ is
$P_\xi(da)$-a.e., and moreover, from Lemma 3.2 in \cite{CD}, if there exists a constant $K>0$ such that
for $\xi_1,\xi_2\in L^2(\mathcal{F};\mathbb{R}^d)$,
$$
E|\partial_\nu\varphi(P_{\xi_1};\xi_1)-\partial_\nu\varphi(P_{\xi_2};\xi_2)|^2\leq K^2 E|\xi_1-\xi_2|^2,
$$
then for all $\xi\in L^2(\mathcal{F};\mathbb{R}^d)$, there is a $P_\xi$-version of
$\partial_\nu\varphi(P_\xi;\cdot):\mathbb{R}^d\rightarrow\mathbb{R}^d$ such that
$$
|\partial_\nu \varphi(P_\xi;a)-\partial_\nu \varphi(P_\xi;a')|\leq K|a-a'|,\ \text{for}\ a,a'\in\mathbb{R}^d.
$$

\subsection{Functional spaces}

Let $T$ be a given time horizon and let $(\Omega, \mathcal{F},P;\mathbb{F}=(\mathcal{F}_t)_{t\in[0,T]})$
be a completed filtered probability space on which is defined a $d$-dimensional Brownian motion $W$. Let $G\subseteq \mathbb{R}$
be a nonempty open set, equipped with its Borel $\sigma$-algebra $\mathcal{B}(G)$, and $\lambda$ be a $\sigma$-finite L\'{e}vy
measure on $(G,\mathcal{B}(G))$, i.e., $\lambda$ satisfying $\int_G(1\wedge|e|)\lambda(de)<+\infty$. Let $N$ be a
Poisson random measure on $[0,T]\times G$ independent of the Brownian motion $W$, with compensator $\mu(de,dt)=\lambda(de)dt$ such
that $\big\{N_\lambda((s,t]\times B)=(N-\mu)((s,t]\times B),\ {s\leq t}, \
B\in\mathcal{B}(G)$ \text{with} $\lambda(B)<+\infty \big\}$ is a martingale.
By $\mathcal{P}^o$ we denote the $\sigma$-field of $\mathbb{F}$-predictable subsets of $\Omega\times [0,T]$.

$\mathbb{F}=\{\mathcal{F}_t\}_{t\in[0,T]}$ is the natural filtration generated by the Brownian motion $W$ and Poisson random measure $N$, augmented with an
independent $\sigma$-algebra $\mathcal{G}^o\subset\mathcal{F}$, that is,
$$
\begin{aligned}
\stackrel{o}{\mathcal{F}}_t&=\sigma\big\{W_s,\ N([0,s]\times A)\big|\ s\leq t,
A\in\mathcal{B}(G)\big\},\\
\mathcal{F}_t:&=\bigcap\limits_{s:s>t}\stackrel{o}{\mathcal{F}}_s\vee\mathcal{G}^o\vee\mathcal{N}_P,\ t\in[0,T],\\
\end{aligned}
$$
where $\mathcal{N}_P$ is the set of all $P$-null subsets, and
$\mathcal{G}^o\subset\mathcal{F}$ has the following properties:\\
\indent{\rm i)} $\mathcal{G}_0$ is independent of
the Brownian motion $W$ and the Poisson random measure $N$;\\
\indent{\rm ii)} $\mathcal{G}_0$ is ``rich enough", i.e.,
$\mathcal{P}_2(\mathbb{R}^d)=\{ P_\xi,\ \xi\in L^2(\mathcal{G}_0;\mathbb{R}^d)\}.$

\indent The following space is used frequently.

\indent $S^2_{\mathbb{F}}(s,t;\mathbb{R}^d):=
\Big\{ \psi| \psi:\Omega\times[s,t]\rightarrow\mathbb{R}^d$ is an $\mathbb{F}$-adapted c\`{a}dl\`{a}g process with\\
\mbox{}\qquad \qquad\qquad\qquad\qquad\qquad\qquad
$E[\mathop{\rm sup}\limits_{s\leq r\leq t}|\psi_r|^2]<+\infty\Big\}.$

\indent $\mathcal{H}^2_{\mathbb{F}}(s,t;\mathbb{R}^d):=
\Big\{\psi| \psi:\Omega\times[s,t]\rightarrow\mathbb{R}^d$ is an $\mathbb{F}$-predictable
process with\\
\mbox{}\qquad \qquad\qquad\qquad\qquad\qquad\qquad $||\psi||^2:=E[\int_s^t|\psi_r|^2dr]<+\infty\Big\}$.

\indent $\mathcal{K}^2_{\lambda}(s,t;\mathbb{R}^d):=
\Big\{K| K:\Omega\times[s,t]\times G\rightarrow\mathbb{R}^d$ is  $\mathcal{P}^o\otimes\mathcal{B}(G)$-measurable with\\
\mbox{}\qquad \qquad\qquad\qquad\qquad\qquad\qquad $||K||^2:=E[\int_s^t\int_G|K_r(e)|^2\lambda(de)dr]<+\infty\Big\}$.

In what follows, by $\mathcal{S}_{\mathbb{F}}^2(s,T+M), \mathcal{H}_{\mathbb{F}}^2(s,T+M), K_\lambda^2(s,T+M)$ we denote $\mathcal{S}_{\mathbb{F}}^2(s,T+M;\mathbb{R}), \mathcal{H}_{\mathbb{F}}^2(s,T+M;\mathbb{R}),
K_\lambda^2(s,T+M;\mathbb{R}),$ for short.

\section{{\protect \large {Existence and uniqueness theorem}}}

In this section we shall show the existence and uniqueness theorem of the equation (\ref{equ 3.1}).

Let $\delta_i:[0,T]\rightarrow \mathbb{R}^+,\ i=1,2,3,$ satisfy:\\
\textbf{(H3.1)} (i) There exists a constant $M>0$, such that for $t\in[0,T],$\\
\begin{equation}\label{equ 3.2}
t+h(t)\leq T+M, \ h=\delta_1,\delta_2,\delta_3;
\end{equation}
(ii) There exists a constant $L>0$, such that for $t\in[0,T]$ and
for all nonnegative and integrable functions $\Gamma(\cdot), \overline{\Gamma}(\cdot,\cdot),$
\begin{equation}\label{equ 3.3}
\begin{aligned}
&\int_t^T\Gamma(s+\delta_i(s))ds\leq L\int_t^{T+M}\Gamma(s)ds, \quad i=1,2,\\
&\int_t^T\int_G\overline{\Gamma}(s+\delta_3(s),e)\lambda(de)ds\leq L\int_t^{T+M}\int_G\overline{\Gamma}(s,e)\lambda(de)ds.
\end{aligned}
\end{equation}
\\

Let the mapping
$$
\begin{aligned}
&f(\omega,s,y,z,k,\xi,\eta,\zeta,\overline{\xi},\overline{\eta},\overline{\zeta},\nu):\Omega\times[0,T]\times\mathbb{R}
\times\mathbb{R}^d\times \mathbb{R}\times\Big(L^2(\mathcal{F}_{r_1};\mathbb{R})\times L^2(\mathcal{F}_{r_2};\mathbb{R}^d)\times\\
&\qquad\qquad\qquad\qquad\qquad \qquad\qquad\qquad
 L^2(\mathcal{F}_{r_3};\mathbb{R})\Big)^2\times \mathcal{P}_2(\mathbb{R}^{(1+d+1)})
 \rightarrow L^2(\mathcal{F}_s;\mathbb{R}), \\
\end{aligned}
$$
$r_1, r_2,r_3\in[s,T+M],$ satisfy:

\textbf{(H3.2)} (i) There exists a constant $C>0$, such that for all $s\in[0,T], y,y'\in\mathbb{R}, z,z'\in\mathbb{R}^d,$
$k,k'\in \mathbb{R},\ \xi_\cdot, \overline{\xi}_\cdot, \xi'_\cdot, \overline{\xi}'_\cdot\in
\mathcal{H}_{\mathbb{F}}^2(s,T+M),$
$\eta_\cdot, \overline{\eta}_\cdot, \eta'_\cdot, \overline{\eta}'_\cdot\in
\mathcal{H}_{\mathbb{F}}^2(s,T+M;\mathbb{R}^d),$\ $\zeta_\cdot, \overline{\zeta}_\cdot, \zeta'_\cdot, \overline{\zeta}'_\cdot\in
\mathcal{H}_{\mathbb{F}}^2(s,T+M),$\ $r_1,r_2,r_3\in[s,T+M],$ $\nu,\nu'\in \mathcal{P}_2(\mathbb{R}^{1+d+1}),$ $P$-a.s.,
$$
\begin{aligned}
&|f(s,y,z,k,\xi_{r_1},\eta_{r_2},\zeta_{r_3},\overline{\xi}_{r_1},\overline{\eta}_{r_2},\overline{\zeta}_{r_3},\nu)
-f(s,y',z',k',\xi'_{r_1},\eta'_{r_2},\zeta'_{r_3},\overline{\xi}'_{r_1},\overline{\eta}'_{r_2},\overline{\zeta}'_{r_3},\nu')|\\
&\leq C\Big(|y-y'|+|z-z'|+|k-k'|
+E^{\mathcal{F}_s}\Big[|\xi_{r_1}-\xi'_{r_1}|+|\eta_{r_2}-\eta'_{r_2}|+| \zeta_{r_3}-\zeta'_{r_3}|\\
&\quad+|\overline{\xi}_{r_1}-\overline{\xi}'_{r_1}|+|\overline{\eta}_{r_2}-\overline{\eta}'_{r_2}|+|\overline{\zeta}_{r_3}-\overline{\zeta}'_{r_3}|\Big]
+W_2(\nu,\nu')\Big);
\end{aligned}
$$
(ii) $E\Big[\int_0^T|f(s,0,0,0,0,0,0,0,0,0,\delta_{\textbf{0}})|^2ds\Big]<+\infty$, where  $\delta_{\textbf{0}}$
denotes the Dirac measure at $(1+d+1)$-dimensional zero vector;\\
(iii) There exists a constant $C>0$, such that the mapping $l(\cdot):G\rightarrow\mathbb{R}$ satisfies $0<l(e)\leq C(1\wedge|e|)$.
\begin{theorem}\label{th 3.1}
Let the assumption \textbf{(H3.1)} and \textbf{(H3.2)} hold true, and let $\varphi_\cdot\in \mathcal{S}_{\mathbb{F}}^2(T,T+M)$,
$\phi_\cdot\in\mathcal{H}_{\mathbb{F}}^2(T,T+M;\mathbb{R}^d),$
$\psi_\cdot\in K_\lambda^2(T,T+M),$ the
anticipated mean-field BSDE (\ref{equ 3.1}) possesses a unique adapted solution
$$
(Y_\cdot,Z_\cdot,K_\cdot)\in \mathcal{S}_{\mathbb{F}}^2(0,T+M)\times
\mathcal{H}_{\mathbb{F}}^2(0,T+M;\mathbb{R}^d)\times K_\lambda^2(0,T+M).
$$
\end{theorem}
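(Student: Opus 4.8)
The plan is to construct the solution of (\ref{equ 3.1}) on the whole interval $[0,T+M]$ at once, by a single Banach fixed point argument in a $\beta$-weighted $L^2$-space, rather than by the usual backward iteration over short subintervals: since the delays $\delta_i$ may be large, even a subinterval adjacent to $T$ can already reach the entire anticipation window $[T,T+M]$, so the contraction has to be global, and assumption \textbf{(H3.1)}(ii) is precisely the device that lets the anticipated contributions be reabsorbed after integrating in time. Throughout the iteration the law term $P_{\Pi_t}$ will also be frozen along the input, so that each Picard step is an ordinary (non-anticipated, non-mean-field) BSDE with jumps, for which existence, uniqueness and the standard a priori estimates are classical.

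\noindent\emph{Step 1 (one Picard step).} Fix $\beta>0$ and let $\mathcal B_\beta$ be the set of triples $(y_\cdot,z_\cdot,k_\cdot)$ in $\mathcal{S}_{\mathbb{F}}^2(0,T+M)\times\mathcal{H}_{\mathbb{F}}^2(0,T+M;\mathbb{R}^d)\times K_\lambda^2(0,T+M)$ coinciding with $(\varphi_\cdot,\phi_\cdot,\psi_\cdot)$ on $[T,T+M]$, normed by $\|(y,z,k)\|_\beta^2:=E\int_0^{T+M}e^{\beta t}(|y_t|^2+|z_t|^2+\int_G|k_t(e)|^2\lambda(de))dt$; this is a complete metric space, and the norm is equivalent to the natural one. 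Given $(y,z,k)\in\mathcal B_\beta$, define on $[0,T]$ the driver
\[
g(t,Y,Z,K):=f\Bigl(t,Y,Z,\textstyle\int_GK(e)l(e)\lambda(de),\,y_{t+\delta_1(t)},z_{t+\delta_2(t)},\int_Gk_{t+\delta_3(t)}(e)l(e)\lambda(de),\overline A^{y}_t,\overline B^{z}_t,\overline C^{k}_t,P_{(y_t,z_t,\int_Gk_t(e)l(e)\lambda(de))}\Bigr),
\]
where $\overline A^{y}_t,\overline B^{z}_t,\overline C^{k}_t$ are the average terms formed from $y,z,k$. By \textbf{(H3.2)}(i), $g$ is Lipschitz in $(Y,Z,\int_GKl\,\lambda)$ with constant $C$; since by \textbf{(H3.2)}(iii) and the L\'evy-measure property $\int_Gl(e)^2\lambda(de)\le C^2\int_G(1\wedge|e|)^2\lambda(de)\le C^2\int_G(1\wedge|e|)\lambda(de)<\infty$, Cauchy--Schwarz turns this into Lipschitz continuity in $K$ for the $K_\lambda^2$-norm; and using \textbf{(H3.2)}(ii), the Lipschitz bound, $W_2(P_\xi,\delta_{\textbf{0}})^2\le E|\xi|^2$ and $(y,z,k)\in\mathcal B_\beta$, one checks $E\int_0^T|g(t,0,0,0)|^2dt<\infty$. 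Hence the classical existence and uniqueness theorem for BSDEs with jumps provides a unique $(Y,Z,K)\in\mathcal{S}_{\mathbb{F}}^2(0,T)\times\mathcal{H}_{\mathbb{F}}^2(0,T;\mathbb{R}^d)\times K_\lambda^2(0,T)$ solving $-dY_t=g(t,Y_t,Z_t,K_t)dt-Z_tdW_t-\int_GK_t(e)N_\lambda(de,dt)$, $Y_T=\varphi_T$; extending it by $(\varphi,\phi,\psi)$ on $[T,T+M]$ defines a map $\mathcal I:\mathcal B_\beta\to\mathcal B_\beta$ whose fixed points are exactly the adapted solutions of (\ref{equ 3.1}) in the stated spaces, and conversely.

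\noindent\emph{Step 2 (the contraction, main step).} Take $(y^i,z^i,k^i)\in\mathcal B_\beta$ with outputs $(Y^i,Z^i,K^i)$, $i=1,2$, and write $\Delta Y=Y^1-Y^2$, $\Delta y=y^1-y^2$, etc.; all $\Delta$-quantities vanish on $[T,T+M]$. Applying Itô's formula for jump processes to $e^{\beta s}|\Delta Y_s|^2$ on $[0,T]$, taking expectations and discarding the martingale terms (genuine martingales by the integrability of $Y,Z,K$) while keeping the nonnegative $\int_Ge^{\beta s}|\Delta K_s(e)|^2\lambda(de)ds$ term on the left, one obtains
\[
E\!\int_0^Te^{\beta s}\Bigl(\beta|\Delta Y_s|^2+|\Delta Z_s|^2+\textstyle\int_G|\Delta K_s(e)|^2\lambda(de)\Bigr)ds\le 2E\!\int_0^Te^{\beta s}|\Delta Y_s|\,|\Delta g(s)|\,ds,
\]
where $\Delta g(s)=g_1(s,Y^1_s,Z^1_s,K^1_s)-g_2(s,Y^2_s,Z^2_s,K^2_s)$ and $g_i$ is the driver built from the $i$-th input. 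By \textbf{(H3.2)}(i), $|\Delta g(s)|\le C(|\Delta Y_s|+|\Delta Z_s|+|\int_G\Delta K_s(e)l(e)\lambda(de)|+E^{\mathcal{F}_s}[|\Delta y_{s+\delta_1(s)}|+|\Delta z_{s+\delta_2(s)}|+|\int_G\Delta k_{s+\delta_3(s)}(e)l(e)\lambda(de)|+|\Delta\overline A_s|+|\Delta\overline B_s|+|\Delta\overline C_s|]+W_2(P_{\Pi^1_s},P_{\Pi^2_s}))$. Young's inequality — absorbing the $|\Delta Z|^2$ and $\int_G|\Delta K|^2\lambda$ parts into the left side, the fixed-coefficient $|\Delta Y|^2$ part by taking $\beta$ larger than a constant depending on $C$ and $\int_Gl^2\lambda$, and the cross terms with the frozen inputs with a $\beta$-scaled parameter (which produces the decisive factor $1/\beta$) — then yields, since the output difference vanishes on $[T,T+M]$ so that the left side controls $\|\mathcal I(y^1,z^1,k^1)-\mathcal I(y^2,z^2,k^2)\|_\beta^2$,
\[
\|\mathcal I(y^1,z^1,k^1)-\mathcal I(y^2,z^2,k^2)\|_\beta^2\le\frac{C''}{\beta}E\!\int_0^Te^{\beta s}\Bigl(|\Delta y_{s+\delta_1(s)}|^2+|\Delta z_{s+\delta_2(s)}|^2+\textstyle\int_G|\Delta k_{s+\delta_3(s)}(e)|^2\lambda(de)+|\Delta\overline A_s|^2+|\Delta\overline B_s|^2+|\Delta\overline C_s|^2+E^{\mathcal{F}_s}|\Delta\Pi_s|^2\Bigr)ds.
\]
For the first three terms, $e^{\beta s}\le e^{\beta(s+\delta_i(s))}$ since $\delta_i\ge0$, so \textbf{(H3.1)}(ii) applied to the deterministic functions $\Gamma(r)=e^{\beta r}E|\Delta y_r|^2$, $e^{\beta r}E|\Delta z_r|^2$ and $\overline\Gamma(r,e)=e^{\beta r}E|\Delta k_r(e)|^2$ bounds them by $L$ times the corresponding components of $\|\Delta(y,z,k)\|_\beta^2$; the average terms $\Delta\overline A,\Delta\overline B,\Delta\overline C$ are bounded by $\rho^{-2}$ times the same components via Cauchy--Schwarz in the $u$-variable and Fubini (here $\rho>0$ is used); and the law term is dominated by $W_2(P_{\Pi^1_s},P_{\Pi^2_s})^2\le E|\Delta\Pi_s|^2$, hence by $\|\Delta(y,z,k)\|_\beta^2$. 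Collecting everything, the right side is at most $\frac{C'}{\beta}\|(y^1,z^1,k^1)-(y^2,z^2,k^2)\|_\beta^2$ with $C'$ depending only on $C,L,M,T,\rho$; choosing $\beta>C'$ makes $\mathcal I$ a strict contraction on $\mathcal B_\beta$.

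\noindent\emph{Step 3 (conclusion).} By the Banach fixed point theorem $\mathcal I$ has a unique fixed point $(Y,Z,K)\in\mathcal B_\beta$, which by Step 1 is the unique solution of (\ref{equ 3.1}) in $\mathcal{S}_{\mathbb{F}}^2(0,T+M)\times\mathcal{H}_{\mathbb{F}}^2(0,T+M;\mathbb{R}^d)\times K_\lambda^2(0,T+M)$. Along this solution $t\mapsto g(t,Y_t,Z_t,K_t)$ is a fixed element of $\mathcal{H}_{\mathbb{F}}^2(0,T)$, so $Y$ solves a BSDE with jumps with square-integrable data; the classical a priori estimate together with the Burkholder--Davis--Gundy inequality then gives $E[\sup_{0\le t\le T}|Y_t|^2]<\infty$, i.e. $Y\in\mathcal{S}_{\mathbb{F}}^2(0,T+M)$, which completes the proof. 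I expect the real work to be concentrated in Step 2: keeping the jump contribution correct in Itô's formula, balancing the $\int_GKl\,\lambda$-dependence of $f$ against the $\int_G|K|^2\lambda$-term produced by Itô, choosing the Young parameters so that the factor $1/\beta$ genuinely appears, and pushing the anticipated terms — including the average terms $\overline A,\overline B,\overline C$ — through the change-of-variables inequality \textbf{(H3.1)}(ii).
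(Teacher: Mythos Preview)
Your argument is correct and follows essentially the same route as the paper: a single Banach fixed-point in a $\beta$-weighted $L^2$-norm, with It\^{o}'s formula applied to $e^{\beta s}|\Delta Y_s|^2$ and assumption \textbf{(H3.1)}(ii) absorbing the anticipated contributions. Two minor differences deserve a remark. First, the paper freezes \emph{all} arguments of $f$ in each Picard step (so the auxiliary equation (\ref{equ 3.4}) has a prescribed driver not depending on $(Y,Z,K)$ at all), whereas you keep the current $(Y,Z,\int Kl\,\lambda)$ live and solve a genuine Lipschitz BSDE with jumps at each step; both choices work and the subsequent estimates are virtually identical. Second, your treatment of the averaged anticipations $\overline A,\overline B,\overline C$ --- Cauchy--Schwarz in the $u$-variable followed by $e^{\beta s}\le e^{\beta(s+u)}$ and Fubini --- gives a constant of order $T/\rho$ (or $1/\rho^2$ with your splitting of the exponential weight) that is \emph{independent of $\beta$}; the paper's estimate (\ref{equ 3.10})--(\ref{equ 3.12}) instead introduces a factor $C_{T,M}e^{\beta T}T$, which makes its final choice of $\beta$ implicit and rather delicate. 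Your route here is the cleaner one. One cosmetic point: since $\|\cdot\|_\beta$ is only an $L^2$-norm, the space $\mathcal B_\beta$ as you define it (with $y\in\mathcal S^2_{\mathbb F}$) is not obviously complete; it is safer to run the contraction in $\mathcal{H}^2_{\mathbb{F}}(0,T+M)\times\mathcal{H}^2_{\mathbb{F}}(0,T+M;\mathbb{R}^d)\times K^2_\lambda(0,T+M)$ and upgrade $Y$ to $\mathcal S^2_{\mathbb F}$ only at the end via Burkholder--Davis--Gundy, exactly as you already sketch in Step~3 and as the paper does after (\ref{equ 3.6}).
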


\begin{proof}
For given $(y,z,k)\in\mathcal{H}_{\mathbb{F}}^2(0,T+M)\times \mathcal{H}_{\mathbb{F}}^2(0,T+M;\mathbb{R}^d)\times K_\lambda^2(0,T+M)$, we
define the following norm, for $\beta>0$,
$$
||(y_\cdot,z_\cdot,k_\cdot)||_{\beta}:=E\int_0^{T+M}e^{\beta s}\Big(|y_s|^2+|z_s|^2+\int_G|k_s(e)|^2\lambda(de)\Big)ds,
$$
under which the Contractive Mapping Theorem can be applied more expediently.\\
Let us consider the equation
\begin{equation}\label{equ 3.4}
\left\{
   \begin{aligned}
    -dY_t&=f(t,y_t,z_t,\int_Gk_t(e)l(e)\lambda(de),a_t,b_t,c_t,\overline{a}_t,\overline{b}_t,\overline{c}_t,P_{\pi_t})dt\\
         &\quad-Z_tdW_t-\int_GK_t(e)N_\lambda(de,dt),\ t\in[0,T],\\
      Y_t&=\varphi_t,\ Z_t=\phi_t,\ K_t=\psi_t(\cdot),\ t\in[T,T+M],
    \end{aligned}
   \right.
   \end{equation}
where $(a_t,b_t,c_t,\overline{a}_t,\overline{b}_t,\overline{c}_t, \pi_t)$ are defined similar to
$(A_t,B_t,C_t,\overline{A}_t,\overline{B}_t,\overline{C}_t, \Pi_t)$ in (\ref{equ 3.1}),
but with $(y,z,k)$ instead of $(Y,Z,K)$.\\
It is easy to check that the equation (\ref{equ 3.4}) exists a unique solution
$$
(Y_\cdot,Z_\cdot,K_\cdot)\in \mathcal{S}_{\mathbb{F}}^2(0,T+M)\times
\mathcal{H}_{\mathbb{F}}^2(0,T+M;\mathbb{R}^d)\times K_\lambda^2(0,T+M).
$$
From which we can define a mapping $\Phi:\mathcal{H}_{\mathbb{F}}^2(0,T+M)\times \mathcal{H}_{\mathbb{F}}^2(0,T+M;\mathbb{R}^d)\times K_\lambda^2(0,T+M)
\rightarrow\mathcal{H}_{\mathbb{F}}^2(0,T+M)\times \mathcal{H}_{\mathbb{F}}^2(0,T+M;\mathbb{R}^d)\times K_\lambda^2(0,T+M)$ such that
$$
\Phi(y_\cdot,z_\cdot,k_\cdot)=(Y_\cdot,Z_\cdot,K_\cdot).
$$

Let us now show that $\Phi$ is a strictly contractive mapping for some suitable $\beta>0$. For this end, let
$(y^i,z^i,k^i)\in\mathcal{H}_{\mathbb{F}}^2(0,T+M)\times\mathcal{H}_{\mathbb{F}}^2(0,T+M;\mathbb{R}^d)\times K_\lambda^2(0,T+M)$ and
$(Y^i,Z^i,K^i)=\Phi(y^i,z^i,k^i), i=1,2.$ By $\Delta Y$ we denote the difference of $Y^1$ and $Y^2$,
and $\Delta Z, \Delta K, \Delta y, \Delta z, \Delta k$ have the similar meaning.

Applying It\^{o}'s formula to $e^{\beta s}|\Delta Y_s|^2,$ we have
\begin{equation}\label{equ 3.5}
\begin{aligned}
&d e^{\beta s}|\Delta Y_s|^2\\
&=\beta e^{\beta s}|\Delta Y_s|^2ds+e^{\beta s}2\Delta Y_{s-}d\Delta Y_s+e^{\beta s}|\Delta Z_s|^2ds
+\int_Ge^{\beta s}|\Delta K_s(e)|^2\lambda(de)ds\\
&\qquad\qquad\qquad\qquad\qquad\qquad\qquad\qquad\qquad\qquad \qquad\qquad
+\int_Ge^{\beta s}|\Delta K_s(e)|^2N_\lambda(de,ds)\\
&=e^{\beta s}\Big(\beta|\Delta Y_s|^2+|\Delta Z_s|^2+\int_G|\Delta K_s(e)|^2\lambda(de)\Big)ds
     -e^{\beta s}2\Delta Y_s\Delta f(s)ds\\
&\quad+e^{\beta s}2 \Delta Y_s\Delta Z_sdW_s
+e^{\beta s}2\Delta Y_{s-}\int_G\Delta K_s(e)N_\lambda(de,ds)+\int_Ge^{\beta s}|\Delta K_s(e)|^2N_\lambda(de,ds),
\end{aligned}
\end{equation}
where
$$
\begin{aligned}
\Delta f(s)&=f(t,y_t^1,z_t^1,\int_Gk_t^1(e)l(e)\lambda(de),a_t^1,b_t^1,c_t^1,\overline{a}_t^1,\overline{b}_t^1,\overline{c}_t^1,P_{\pi_t^1})\\
           &\quad-f(t,y_t^2,z_t^2,\int_Gk_t^2(e)l(e)\lambda(de),a_t^2,b_t^2,c_t^2,\overline{a}_t^2,\overline{b}_t^2,\overline{c}_t^2,P_{\pi_t^2}),\\
\pi_t^i&=(y_t^i,z_t^i,\int_Gk_t^i(e)l(e)\lambda(de)),  i=1,2.
\end{aligned}
$$
Integrating from $t$ to $T$, and then taking conditional expectation, it follows
\begin{equation}\label{equ 3.6}
\begin{aligned}
&e^{\beta t}|\Delta Y_t|^2+E^{\mathcal{F}_t}\Big[\int_t^Te^{\beta s}(\beta|\Delta Y_s|^2+|\Delta Z_s|^2+\int_G|\Delta K_s(e)|^2\lambda(de))ds\Big]
=E^{\mathcal{F}_t}\Big[\int_t^Te^{\beta s}2\Delta Y_s\Delta f(s)ds\Big].
\end{aligned}
\end{equation}
In particular, as $t=0$,
\begin{equation}\label{equ 3.7}
\begin{aligned}
&E\Big[\int_0^Te^{\beta s}(\frac{\beta}{2}|\Delta Y_s|^2+|\Delta Z_s|^2+\int_G|\Delta K_s(e)|^2\lambda(de))ds\Big]\\
&\leq \frac{2C^2}{\beta}E\Big[\int_0^Te^{\beta s}
\Big(|\Delta y_s|+|\Delta z_s|
+|\int_G\Delta k_s(e)l(e)\lambda(de)|
+E^{\mathcal{F}_s}
\Big[|\Delta y_{s+\delta_1(s)}|+|\Delta z_{s+\delta_2(s)}|\\
&\quad+|\int_G\Delta k_{s+\delta_3(s)}(e)l(e)\lambda(de)|
+|\int_0^{\delta_1(s)}e^{-\rho r}\Delta y_{s+r}dr|+|\int_0^{\delta_2(s)}e^{-\rho r}\Delta z_{s+r}dr|\\
&\quad+ |\int_0^{\delta_3(s)}e^{-\rho r}\int_G\Delta  k_{s+r}(e)l(e)\lambda(de)dr\Big]
+W_2(P_{\pi_s^1},P_{\pi_s^2})\Big)^2ds\Big].
\end{aligned}
\end{equation}
From H\"{o}lder inequality and  the fact $W_2(P_\xi,P_\eta)\leq \{E|\xi-\eta|^2\}^\frac{1}{2}$, we have
\begin{equation}\label{equ 3.8}
\begin{aligned}
&E\Big[\int_0^Te^{\beta s}(\frac{\beta}{2}|\Delta Y_s|^2+|\Delta Z_s|^2+\int_G|\Delta K_s(e)|^2\lambda(de))ds\Big]\\
&\leq \frac{20C^2}{\beta}E\Big[\int_0^Te^{\beta s}
\Big(|\Delta y_s|^2+|\Delta z_s|^2
+|\int_G\Delta k_s(e)l(e)\lambda(de)|^2
+|\Delta y_{s+\delta_1(s)}|^2+|\Delta z_{s+\delta_2(s)}|^2\\
&\quad+|\int_G\Delta k_{s+\delta_3(s)}(e)l(e)\lambda(de)|^2
+|\int_0^{\delta_1(s)}e^{-\rho r}\Delta y_{s+r}dr|^2+|\int_0^{\delta_2(s)}e^{-\rho r}\Delta z_{s+r}dr|^2\\
&\quad+ |\int_0^{\delta_3(s)}e^{-\rho r}\int_G\Delta  k_{s+r}(e)l(e)\lambda(de)dr|^2
+E\Big[|\Delta y_s|^2+|\Delta z_s|^2+|\int_G\Delta k_s(e)l(e)\lambda(de)|^2\Big]
\Big)ds\Big].\\
\end{aligned}
\end{equation}
From (\ref{equ 3.3}) and H\"{o}lder inequality, it is clear that
\begin{equation}\label{equ 3.9}
\begin{aligned}
\int_0^Te^{\beta s}|\int_G\Delta k_{s+\delta_3(s)}(e)l(e)\lambda(de)|^2ds
&\leq\int_G|l(e)|^2\lambda(de)\cdot\int_0^T\int_Ge^{\beta s}|\Delta k_{s+\delta_3(s)}(e)|^2\lambda(de)ds\\
&\leq L\int_G|l(e)|^2\lambda(de)\int_0^{T+M}\int_Ge^{\beta s}|\Delta k_{s}(e)|^2\lambda(de)ds.
\end{aligned}
\end{equation}
Moreover, notice $\delta_3(s)\leq T+M$, from H\"{o}lder inequality, it yields
\begin{equation}\label{equ 3.10}
\begin{aligned}
&\int_0^Te^{\beta s}\Big|\int_0^{\delta_3(s)}e^{-\rho r}\int_G\Delta  k_{s+r}(e)l(e)\lambda(de)dr\Big|^2ds\\
&\leq\int_0^Te^{\beta s}\int_0^{\delta_3(s)}e^{-2\rho r}dr\int_0^{\delta_3(s)}(\int_G\Delta  k_{s+r}(e)l(e)\lambda(de))^2drds\\
&\leq\frac{1}{2\rho}(1-e^{-2\rho(T+M)})\int_G|l(e)|^2\lambda(de)\cdot
\int_0^T\int_0^{\delta_3(s)}\int_Ge^{\beta s}|\Delta k_{s+r}(e)|^2\lambda(de)drds.
\end{aligned}
\end{equation}
Denote $C_{T,M}=\frac{1}{2\rho}(1-e^{-2\rho(T+M)})$, due to $s\leq T\leq T+r,\ r>0$, then
\begin{equation}\label{equ 3.11}
\begin{aligned}
&\int_0^Te^{\beta s}\Big|\int_0^{\delta_3(s)}e^{-\rho r}\int_G\Delta  k_{s+r}(e)l(e)\lambda(de)dr\Big|^2ds\\
&\leq C_{T,M}\int_G|l(e)|^2\lambda(de)\int_0^T\int_0^{\delta_3(s)}\int_Ge^{\beta (T+r)}|\Delta k_{s+r}(e)|^2\lambda(de)drds.
\end{aligned}
\end{equation}
Let $u=s+r$, from  $\delta_3(s)\leq T+M$, we have
\begin{equation}\label{equ 3.12}
\begin{aligned}
&\int_0^Te^{\beta s}\Big|\int_0^{\delta_3(s)}e^{-\rho r}\int_G\Delta  k_{s+r}(e)l(e)\lambda(de)dr\Big|^2ds\\
&\leq C_{T,M}e^{\beta T}\int_G|l(e)|^2\lambda(de)\int_0^T\int_0^{\delta_3(s)}\int_Ge^{\beta (u-s)}|\Delta k_{u}(e)|^2\lambda(de)duds\\
&\leq C_{T,M}e^{\beta T}T\int_G|l(e)|^2\lambda(de)\int_0^{T+M}\int_Ge^{\beta u}|\Delta k_{u}(e)|^2\lambda(de)du.
\end{aligned}
\end{equation}
Hence, from (\ref{equ 3.8}), (\ref{equ 3.9}), (\ref{equ 3.12}) and utilizing the argument of calculating
(\ref{equ 3.9}) and (\ref{equ 3.12}), it follows

$$
\begin{aligned}
&E\Big[\int_0^{T+M}e^{\beta s}\big(\frac{\beta}{2}|\Delta Y_s|^2+|\Delta Z_s|^2+\int_G|\Delta K_s(e)|^2\lambda(de)\big)ds\Big]\\
&\leq\frac{20C^2}{\beta}\Big(2+(1+\int_G|l(e)|^2\lambda(de))(L+C_{T,M}e^{\beta T}T)\Big)
E\Big[\int_0^{T+M}e^{\beta s}\big(|\Delta y_s|^2+|\Delta z_s|^2\\
&\quad+\int_G|\Delta k_s(e)|^2\lambda(de)\big)ds\Big].
\end{aligned}
$$
Choosing $\beta=40C^2\Big(2+(1+\int_G|l(e)|^2\lambda(de))(L+C_{T,M}e^{\beta T}T)\Big)+2$, then we obtain
$$
\begin{aligned}
&E\Big[\int_0^{T+M}e^{\beta s}\big(|\Delta Y_s|^2+|\Delta Z_s|^2+\int_G|\Delta K_s(e)|^2\lambda(de)\big)ds\Big]\\
&\leq\frac{1}{2}
E\Big[\int_0^{T+M}e^{\beta s}\big(|\Delta y_s|^2+|\Delta z_s|^2+\int_G|\Delta k_s(e)|^2\lambda(de)\big)ds\Big],
\end{aligned}
$$
which means that $\Phi$ is a strictly contractive mapping. Hence, the equation (\ref{equ 3.1}) exists a unique
solution $(Y,Z,K)\in \mathcal{H}^2_{\mathbb{F}}(0,T+M)\times\mathcal{H}^2_{\mathbb{F}}(0,T+M; \mathbb{R}^d)\times
\mathcal{K}_\lambda^2(0,T+M).$ Moreover, observing (\ref{equ 3.6}), with the similar argument
and Burkholder-Davis-Gundy inequality
one can check $Y\in \mathcal{S}_{\mathbb{F}}^2(0,T+M).$ The proof is completed.
\end{proof}

\begin{proposition}
Let the assumptions \textbf{(H3.1)} and \textbf{(H3.2)} be in force, then there exists a constant
$L_0$ depending on $L, C$ and $T$, such that for
$(\varphi, \phi,\psi)\in \mathcal{S}^2_{\mathbb{F}}(T,T+M)\times\mathcal{H}^2_{\mathbb{F}}(T,T+M;\mathbb{R}^d)\times \mathcal{K}^2_\lambda(T,T+M)$,
 and $t\in[0,T]$,
\begin{equation}\label{equ 3.13}
 \begin{aligned}
 &E\Big[\sup\limits_{t\leq s\leq T}|Y_s|^2+\int_t^T|Z_s|^2ds+\int_t^T\int_G|K_s(e)|^2\lambda(de)ds\Big]\\
 &\leq L_0E\Big[|\varphi_T|^2+\int_T^{T+M}\big(|\varphi_s|^2+|\phi_s|^2+\int_G|\psi_s(e)|^2\lambda(de)\big)ds
 +\int_t^T|f(s,0,0,0,0,0,0,0,0,0,\delta_\mathbf{0})|^2ds\Big],
 \end{aligned}
\end{equation}
\end{proposition}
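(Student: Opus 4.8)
The plan is to run the standard a priori-estimate scheme for (anticipated, mean-field) BSDEs with jumps: an It\^o computation on $e^{\beta s}|Y_s|^2$, a Lipschitz estimate of the driver, absorption of the anticipated terms through (H3.1)(ii), and a Burkholder--Davis--Gundy argument to pass from the $\mathcal H^2$-bound to the $\mathcal S^2$-bound; the extra bookkeeping for the mean-field and jump terms is as in the proof of Theorem \ref{th 3.1}. Throughout, write $f(s,\Theta_s)$ for the driver of (\ref{equ 3.1}) evaluated along its solution $(Y,Z,K)$, and note that by (H3.2)(iii) one has $c_l:=\int_G l(e)^2\lambda(de)\leq C^2\int_G(1\wedge|e|)\lambda(de)<+\infty$.

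First I would apply It\^o's formula to $e^{\beta s}|Y_s|^2$ on $[t,T]$ as in (\ref{equ 3.5}), now with terminal value $Y_T=\varphi_T$; taking expectation, using that the stochastic integrals are genuine martingales since $(Y,Z,K)$ is the solution provided by Theorem \ref{th 3.1}, and dropping the nonnegative term $E[e^{\beta t}|Y_t|^2]$, I obtain
$$
E\int_t^T e^{\beta s}\Big(\beta|Y_s|^2+|Z_s|^2+\int_G|K_s(e)|^2\lambda(de)\Big)ds\leq e^{\beta T}E|\varphi_T|^2+2E\int_t^T e^{\beta s}Y_s f(s,\Theta_s)ds.
$$
Next I would split $f(s,\Theta_s)=\big(f(s,\Theta_s)-f(s,0,\ldots,0,\delta_{\mathbf{0}})\big)+f(s,0,\ldots,0,\delta_{\mathbf{0}})$, bound the difference by (H3.2)(i), use $W_2(P_{\Pi_s},\delta_{\mathbf{0}})\leq\{E|\Pi_s|^2\}^{1/2}$ and $|\int_G K_s(e)l(e)\lambda(de)|^2\leq c_l\int_G|K_s(e)|^2\lambda(de)$, apply conditional Jensen to the $E^{\mathcal F_s}[\cdot]$-terms, and then Young's inequality $2|Y_s|\,|v|\leq\varepsilon^{-1}|Y_s|^2+\varepsilon|v|^2$ to each summand, with a small parameter $\varepsilon$ exactly on the summands that regenerate $|Z|^2$ or $\int_G|K(e)|^2\lambda(de)$, namely the $Z_s$-term, the $W_2$-term and the anticipated terms $B_s,C_s,\overline B_s,\overline C_s$.

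To handle the anticipated terms, after taking expectation and removing the conditional expectations by the tower property, I would use $e^{\beta s}\leq e^{\beta(s+\delta_i(s))}$ together with (\ref{equ 3.3}) for the one-point delays $A_s,B_s,C_s$, and Cauchy--Schwarz in $r$ as in (\ref{equ 3.10}) for the averaged delays, giving e.g. $|\overline A_s|^2\leq C_{T,M}\int_0^{\delta_1(s)}|Y_{s+r}|^2dr$ with $C_{T,M}=\frac{1}{2\rho}(1-e^{-2\rho(T+M)})$, then $e^{\beta s}\leq e^{\beta u}$ for $u=s+r\geq s$ and $\int_t^T(\cdot)ds\leq T\int_t^{T+M}(\cdot)du$; in each case I split the resulting $\int_t^{T+M}$ as $\int_t^T+\int_T^{T+M}$ and use $(Y,Z,K)=(\varphi,\phi,\psi)$ on $[T,T+M]$, so the second piece becomes data. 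Choosing $\varepsilon$ small enough, depending only on $C,L,c_l,\rho,M,T$, so that the coefficient of $E\int_t^T e^{\beta s}(|Z_s|^2+\int_G|K_s(e)|^2\lambda(de))ds$ on the right is at most $\tfrac12$, I absorb those terms; the surviving right-hand side is $\Lambda\,E\int_t^T e^{\beta s}|Y_s|^2ds$ plus data, with $\Lambda$ independent of $\beta$, so taking $\beta=2\Lambda+1$ keeps $\tfrac\beta2|Y_s|^2$ on the left. Since $1\leq e^{\beta s}\leq e^{\beta(T+M)}$ on $[0,T+M]$, this gives the bound for $E\big[\int_t^T|Z_s|^2ds+\int_t^T\int_G|K_s(e)|^2\lambda(de)ds\big]$ with a constant of the asserted type.

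For the $\sup$-term I would return to the pathwise It\^o identity, write it at a generic $r\in[t,T]$, drop the nonnegative left-hand bracket, and use $-\int_r^T\int_G e^{\beta s}|K_s(e)|^2N_\lambda(de,ds)\leq\int_t^T\int_G e^{\beta s}|K_s(e)|^2\lambda(de)ds$ since $\int\int|K|^2N\geq 0$; after taking $\sup_{t\leq r\leq T}$ and expectation I would control the two martingale terms $E\sup_r|\int_r^T e^{\beta s}Y_s Z_s dW_s|$ and $E\sup_r|\int_r^T\int_G e^{\beta s}Y_{s-}K_s(e)N_\lambda(de,ds)|$ by Burkholder--Davis--Gundy, dominating the square roots via Young by $\tfrac18 E\sup_{t\leq s\leq T}e^{\beta s}|Y_s|^2+(\text{const})\,E\big(\int_t^T e^{\beta s}|Z_s|^2ds+\int_t^T\int_G e^{\beta s}|K_s(e)|^2\lambda(de)ds\big)$, and similarly $2\int_t^T e^{\beta s}|Y_s|\,|f(s,\Theta_s)|ds\leq\tfrac18 E\sup_{t\leq s\leq T}e^{\beta s}|Y_s|^2+(\text{const})\int_t^T e^{\beta s}|f(s,\Theta_s)|^2ds$ with the last integral controlled by the previous step and the data via the Lipschitz bound; absorbing the $\sup$-terms and combining with the previous step yields (\ref{equ 3.13}). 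I expect the main obstacle to be the apparent circularity in the second paragraph: the anticipated $Z$- and $K$-arguments regenerate $\int_t^T e^{\beta s}|Z_s|^2ds$ and $\int_t^T\int_G e^{\beta s}|K_s(e)|^2\lambda(de)ds$ on the right, which enlarging $\beta$ cannot remove; this is resolved by the order of the choices (fix $\varepsilon$ small first, so these coefficients are $\leq\tfrac12$ at the cost of large $|Y|^2$-coefficients, and only afterwards take $\beta$ large), together with the observation that (\ref{equ 3.3}) applies pathwise to the random integrands $e^{\beta\cdot}|Y_\cdot|^2$, $e^{\beta\cdot}|Z_\cdot|^2$ and $e^{\beta\cdot}|K_\cdot(\cdot)|^2$.
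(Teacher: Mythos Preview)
Your proposal is correct and follows precisely the standard a priori-estimate scheme the paper alludes to: the paper does not actually write out a proof here but simply states ``The proof is standard; refer to Proposition 4.4 in Peng and Yang \cite{PY}.'' Your It\^o/Lipschitz/absorption argument, together with the handling of the anticipated terms via (H3.1)(ii) exactly as in the proof of Theorem \ref{th 3.1} and the final BDG step, is this standard argument; your remark on fixing $\varepsilon$ before $\beta$ is the right way to avoid the circularity you flag.
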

where $\delta_\mathbf{0}$ is given in (\textbf{H3.2}).
The proof is standard; refer to Proposition 4.4 in Peng and Yang \cite{PY}.

\section{{\protect \large {Comparison theorem}}}

Let us now analyze the comparison theorem of the equation (\ref{equ 3.1}). For one thing,
Peng and Yang \cite{PY} have stated with two examples that the comparison theorem of anticipated BSDEs  does not
hold true when the coefficient $f$ is decreasing in the anticipated term of $Y$, and $f$ depends on the anticipated
term of $Z$. For another, if the coefficient $f$ depends on the mean-field term of $Z$, or $f$ is decreasing with respect
to the mean-field term of $Y$, the comparison theorem of mean-field BSDEs also becomes invalid, see counter-example in Buckdahn,
Li and Peng \cite{BLP}. Therefore, we here just consider the comparison theorem of a class of anticipated mean-field BSDEs with jumps. Let
us introduce it in detail.\\

We assume that for $r_1\in[s,T+M],$
$$
\begin{aligned}
&f(\omega,s,y,z,k,\xi,\overline{\xi},\nu):\Omega\times[0,T]\times\mathbb{R}
\times\mathbb{R}^d\times \mathbb{R}\times L^2(\mathcal{F}_{r_1};\mathbb{R})\times L^2(\mathcal{F}_{r_1};\mathbb{R})\times \mathcal{P}_2(\mathbb{R})
 \rightarrow L^2(\mathcal{F}_s;\mathbb{R})
\end{aligned}
$$
satisfy (\textbf{H3.2}).

Let us consider the following anticipated mean-field BSDE:\\
\begin{equation}\label{equ 3.14}
\left\{
   \begin{aligned}
    -dY_t&=f(t,Y_t,Z_t,\int_GK_t(e)l(e)\lambda(de),A_t,\overline{A}_t,P_{Y_t})dt-Z_tdW_t-\int_GK_t(e)N_\lambda(de,dt),\\
      Y_t&=\varphi_t,\ Z_t=\phi_t,\ K_t=\psi_t(\cdot),\ t\in[T,T+M],
    \end{aligned}
   \right.
   \end{equation}
where $A_t, \overline{A}_t$ are given in (\ref{equ 3.1}).

In order to prove the comparison theorem of anticipated mean-field BSDE with jumps (\ref{equ 3.14}), let us first investigate the
comparison theorem of general mean-field BSDEs with jumps.

\begin{lemma}\label{le 4.1}
Let $f_i:\Omega\times[0,T]\times\mathbb{R}\times\mathbb{R}^d\times\mathbb{R}\times\mathcal{P}_2(\mathbb{R})\rightarrow\mathbb{R},\ i=1,2$ be
the drivers and moreover, we assume that
there exists a constant $C>0$, such that the derivatives of $f_1$ with respect to $\nu$ and $k$ are positive and bounded by $C>0,$
i.e.,  $0<\partial_k f_1\leq C$ and $0<\partial_\nu f_1\leq C$.
Let $(Y^i,Z^i,K^i),\ i=1,2$ be the solution of the following mean-field BSDE with jumps:
\begin{equation}\label{equ 6.1}
\left\{
   \begin{aligned}
    -dY^i_t&=f_i(t,Y^i_t,Z^i_t,\int_GK^i_t(e)l(e)\lambda(de),P_{Y_t^i})dt-Z^i_tdW_t-\int_GK^i_t(e)N_\lambda(de,dt),\\
      Y^i_T&=\varphi^i_T.
    \end{aligned}
   \right.
   \end{equation}
If $f_1\geq f_2$ and $\varphi_T^1\geq\varphi_T^2$, then $Y^1_t\geq Y^2_t,$ a.e., a.s.
\end{lemma}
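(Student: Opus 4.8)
The plan is to linearize the difference of the two equations, to represent $\Delta Y:=Y^1-Y^2$ through a strictly positive ``adjoint'' process in the spirit of the classical change-of-measure comparison argument for BSDEs with jumps (Barles--Buckdahn--Pardoux), and to handle the mean-field term by a monotone iteration that stays in the cone of nonnegative processes. Throughout I would use that $(Y^i,Z^i,K^i)\in\mathcal{S}^2_{\mathbb{F}}(0,T)\times\mathcal{H}^2_{\mathbb{F}}(0,T;\mathbb{R}^d)\times\mathcal{K}^2_\lambda(0,T)$.

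First I would set $\Delta Y=Y^1-Y^2$, $\Delta Z=Z^1-Z^2$, $\Delta K=K^1-K^2$, $\Delta\varphi_T=\varphi^1_T-\varphi^2_T\geq 0$, and split the difference of the generators by changing one argument at a time, leaving the remainder $\bar f_t:=(f_1-f_2)\big(t,Y^2_t,Z^2_t,\int_GK^2_t(e)l(e)\lambda(de),P_{Y^2_t}\big)\geq 0$, which is nonnegative because $f_1\geq f_2$. The $y$- and $z$-increments are handled by the usual difference-quotient device and, by the Lipschitz assumption \textbf{(H3.2)}(i), give bounded coefficients $a_t,b_t$ with $|a_t|+|b_t|\leq C'$. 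Since $f_1$ depends on the $k$-slot only through the scalar $\kappa^i_t:=\int_GK^i_t(e)l(e)\lambda(de)$ and $0<\partial_k f_1\leq C$, the corresponding increment equals $\gamma_t(\kappa^1_t-\kappa^2_t)=\int_GU_t(e)\Delta K_t(e)\lambda(de)$ with $U_t(e)=\gamma_tl(e)$ and $0<\gamma_t\leq C$; here \textbf{(H3.2)}(iii) together with the L\'evy property give $\int_Gl(e)^2\lambda(de)\leq C^2\int_G(1\wedge|e|)\lambda(de)<\infty$, so $U_t(\cdot)\in L^2(\lambda)$ and, crucially, $U_t(e)>-1$ (indeed $>0$). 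Finally, by the Fr\'echet differentiability assumed in Section~2 and $0<\partial_\nu f_1\leq C$, a fundamental-theorem-of-calculus argument writes the increment in the measure argument as $\widetilde{E}\big[\lambda_t\,\widetilde{\Delta Y}_t\big]$, where $\widetilde{E}$ integrates over an independent copy $\widetilde{\Delta Y}_t$ of $\Delta Y_t$ and $\lambda_t$ is progressively measurable with $0\leq\lambda_t\leq C$. Collecting terms, $(\Delta Y,\Delta Z,\Delta K)$ solves the linear mean-field BSDE with jumps
$$-d\Delta Y_t=\Big(a_t\Delta Y_t+b_t\Delta Z_t+\int_GU_t(e)\Delta K_t(e)\lambda(de)+\widetilde{E}\big[\lambda_t\,\widetilde{\Delta Y}_t\big]+\bar f_t\Big)dt-\Delta Z_t\,dW_t-\int_G\Delta K_t(e)N_\lambda(de,dt),$$
with terminal value $\Delta Y_T=\Delta\varphi_T$.

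Next I would introduce the process $\Gamma$ solving $d\Gamma_t=\Gamma_{t-}\big(a_t\,dt+b_t\,dW_t+\int_GU_t(e)N_\lambda(de,dt)\big)$, $\Gamma_0=1$; since $U_t(e)>-1$, $a,b$ are bounded and $U(\cdot)\in L^2(\lambda)$, the process $\Gamma$ is strictly positive and has moments of all orders. Applying It\^o's product rule to $\Gamma_s\Delta Y_s$ and using the defining equation of $\Gamma$, all finite-variation terms cancel except $-\Gamma_s\big(\bar f_s+\widetilde{E}[\lambda_s\widetilde{\Delta Y}_s]\big)ds$, and the remaining Brownian and compensated-jump integrals are true martingales; integrating on $[t,T]$ and taking $E^{\mathcal{F}_t}$ yields
$$\Gamma_t\Delta Y_t=E^{\mathcal{F}_t}\Big[\Gamma_T\Delta\varphi_T+\int_t^T\Gamma_s\big(\bar f_s+\widetilde{E}[\lambda_s\widetilde{\Delta Y}_s]\big)ds\Big].$$
To break the circular dependence on $\widetilde{E}[\lambda_s\widetilde{\Delta Y}_s]$, I would iterate: put $\Delta Y^0\equiv 0$ and let $(\Delta Y^{n+1},\Delta Z^{n+1},\Delta K^{n+1})$ solve the same linear equation with terminal value $\Delta\varphi_T$ but with $\widetilde{E}[\lambda_s\widetilde{\Delta Y}^n_s]$ in place of $\widetilde{E}[\lambda_s\widetilde{\Delta Y}_s]$. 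Each of these is an ordinary linear BSDE with jumps (with the same $\Gamma$) whose source $\bar f_s+\widetilde{E}[\lambda_s\widetilde{\Delta Y}^n_s]$ is nonnegative as soon as $\Delta Y^n\geq 0$ (because $\lambda_s\geq 0$), so the $\Gamma$-representation gives $\Gamma_t\Delta Y^{n+1}_t\geq 0$ and hence $\Delta Y^{n+1}_t\geq 0$; by induction every iterate is nonnegative. Because the linearized mean-field BSDE has bounded coefficients, this Picard scheme converges, by the same weighted-norm contraction argument as in the proof of Theorem~\ref{th 3.1}, to the unique solution of that equation, which is $(\Delta Y,\Delta Z,\Delta K)=(Y^1-Y^2,Z^1-Z^2,K^1-K^2)$ itself. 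Therefore $\Delta Y_t\geq 0$, i.e. $Y^1_t\geq Y^2_t$, a.e., a.s.

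The step I expect to be the main obstacle is precisely the ``more subtle calculation'' flagged in the Introduction: one must (i) carry out the linearization of the measure argument via $\partial_\nu f_1$ so that the coefficient $\lambda_t$ is genuinely progressively measurable, bounded and $\geq 0$; (ii) check that the coefficient $U_t$ arising from the $k$-argument satisfies $U_t(e)>-1$ and $U_t(\cdot)\in L^2(\lambda)$ --- this is where the hypotheses $0<\partial_k f_1\leq C$, $0<l(e)\leq C(1\wedge|e|)$ and $\int_G(1\wedge|e|)\lambda(de)<\infty$ are indispensable, since together they guarantee that $\Gamma$ stays positive and square-integrable; and (iii) push the product-rule computation for $\Gamma_s\Delta Y_s$ correctly through the jump contributions (in particular the compensator of $[\Gamma,\Delta Y]$) and justify the martingale property of the stochastic integrals via the Burkholder--Davis--Gundy inequality and the $\mathcal{S}^2$/$\mathcal{H}^2$/$\mathcal{K}^2$ bounds on the solutions. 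The reductions $f_1\geq f_2\Rightarrow\bar f\geq 0$, $\varphi^1_T\geq\varphi^2_T\Rightarrow\Delta\varphi_T\geq 0$, and the cone-preserving iteration are then routine.
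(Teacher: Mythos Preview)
Your argument is correct, but it takes a genuinely different route from the paper's own proof. The paper proceeds by applying It\^o's formula to $((\Delta Y_t)^+)^2$ with $\Delta Y=Y^2-Y^1$, taking expectations, and then carefully dissecting the jump contribution according to the sets $A=\{\Delta Y_s>0\}$ and $B=\{\Delta Y_s+\Delta K_s(e)>0\}$; the positivity assumptions $0<\partial_k f_1$ and $l(\cdot)>0$ are used precisely to show that the residual term $E\!\int_t^T\!\int_G\mathbbm{1}_{AB^c}\big(-(\Delta Y_s)^2-2\Delta Y_s\Delta K_s(e)(1+\alpha_k(s)l(e))\big)\lambda(de)\,ds$ is nonnegative, and $0<\partial_\nu f_1\le C$ is used to bound $\widehat E[\widehat\alpha_\nu(s)\widehat{\Delta Y}_s]$ by $CE[(\Delta Y_s)^+]$. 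Gronwall's lemma then gives $(\Delta Y_t)^+=0$.

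Your approach instead builds the positive Dol\'eans--Dade adjoint $\Gamma$ and represents $\Gamma_t\Delta Y_t$ explicitly, handling the mean-field coupling by a cone-preserving Picard iteration. What this buys you is a constructive representation of $\Delta Y$ and a proof that avoids the case analysis on $A,B,A^c,B^c$; the same hypotheses enter, but differently: $\partial_k f_1>0$ and $l>0$ give $U_t(e)>0>-1$ so that $\Gamma>0$, and $\partial_\nu f_1\ge 0$ makes the frozen source nonnegative at each step. The paper's approach, by contrast, stays closer to the standard $((\cdot)^+)^2$ technique and needs no moment control on an exponential martingale, only elementary inequalities and Gronwall; it is more self-contained but less transparent about \emph{why} the sign conditions on $\partial_k f_1$ and $l$ are exactly what is needed. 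Both arguments are valid; yours is in the Barles--Buckdahn--Pardoux spirit, the paper's is in the El~Karoui--Peng--Quenez spirit.
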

\begin{proof}
We denote $\Delta Y=Y^2-Y^1, \Delta Z=Z^2-Z^1,\Delta K=K^2-K^1,\Delta \varphi=\varphi^2-\varphi^1$, then
\begin{equation}\label{equ 6.2}
\left\{
   \begin{aligned}
    -d\Delta Y_s&=\big(\delta f(s)+\alpha_y(s) \Delta Y_s+\alpha_z(s) \Delta Z_s+\int_G \alpha_k(s)\Delta K_sl(e)\lambda(de)+
    \widehat{E}[\widehat{\alpha}_\nu(s)\widehat{\Delta Y}_s]
    \big)ds-\Delta Z_sdW_s\\
                &\quad-\int_G\Delta K_s(e)N_\lambda(de,ds),\ s\in[0,T],\\
      \Delta Y_T&=\Delta\varphi_T,
    \end{aligned}
   \right.
   \end{equation}
where  for $\ell=y,z,k$,
$$
\begin{aligned}
\delta f(s)&:=f_2(s,Y^2_s,Z^2_s,\int_GK^2_s(e)l(e)\lambda(de), P_{Y^2_s})-f_1(s,Y^2_s,Z^2_s,\int_GK^2_s(e)l(e)\lambda(de), P_{Y^2_s}),\\
\alpha_\ell(s)&:=\int_0^1\frac{\partial f_1}{\partial \ell}
(s,Y^1_s+\rho(Y^2_s-Y^1_s),Z^1_s+\rho(Z^2_s-Z^1_s),\int_G(K_s^1(e)+\rho(K_s^2(e)-K_s^1(e))l(e)\lambda(de),\\
&\qquad\qquad\qquad\qquad\qquad\qquad\qquad\qquad\qquad\qquad\qquad\qquad \qquad \qquad \qquad\qquad   P_{Y^1_s+\rho(Y^2_s-Y^1_s)})d\rho,\\
\widehat{\alpha}_\nu(s)&:=\int_0^1\frac{\partial f_1}{\partial \nu}
(s,Y^1_s+\rho(Y^2_s-Y^1_s),Z^1_s+\rho(Z^2_s-Z^1_s),\int_G(K_s^1(e)+\rho(K_s^2(e)-K_s^1(e)) l(e)\lambda(de),\\
&\qquad\qquad\qquad\qquad\qquad\qquad\qquad\qquad\qquad\qquad\qquad \qquad
 P_{Y^1_s+\rho(Y^2_s-Y^1_s)};
\widehat{Y}^1_s+\rho(\widehat{Y}^2_s-\widehat{Y}^1_s))d\rho.\\
\end{aligned}
$$
Obviously, from (\textbf{H3.2})  it follows $|\alpha_\ell(s)|\leq C, s\in[0,T].$\\
Applying It\^{o}'s formula to $((\Delta Y_t)^+)^2$  we obtain
\begin{equation}\label{equ 3.17}
\begin{aligned}
&((\Delta Y_t)^+)^2+\int_t^T\mathbbm{1}_{\{\Delta Y_s>0\}}|\Delta Z_s|^2ds
+\int_t^T\int_G\Big(((\Delta Y_{s-}+\Delta K_s(e))^+)^2-((\Delta Y_{s-})^+)^2\\
&\qquad\qquad\qquad\qquad\qquad\qquad\qquad\qquad\qquad\qquad\qquad
  -2\mathbbm{1}_{\{\Delta Y_s>0\}}\Delta Y_{s-}\Delta K_s(e)\Big)N(de,ds)\\
&=((\Delta Y_T)^+)^2+\int_t^T2\mathbbm{1}_{\{\Delta Y_s>0\}}\Delta Y_s
\Big\{\alpha_y(s)\Delta Y_s+\alpha_z(s)\Delta Z_s+\int_G\alpha_k(s)\Delta K_s(e)l(e)\lambda(de)\\
&\quad+ \widehat{E}[\widehat{\alpha}_\nu(s)\widehat{\Delta Y}_s]+\delta f(s)\Big\}ds
-\int_t^T2\mathbbm{1}_{\{\Delta Y_s>0\}} \Delta Y_s\Delta Z_s dW_s\\
&\quad-\int_t^T\int_G2\mathbbm{1}_{\{\Delta Y_s>0\}}\Delta Y_s \Delta K_s(e)N_\lambda(de,ds).
\end{aligned}
\end{equation}
Taking expectation on both sides of (\ref{equ 3.17}) and notice $\delta f(s)\leq0, \Delta Y_T\leq0$, one has
\begin{equation}\label{equ 3.18}
\begin{aligned}
&E\Big[((\Delta Y_t)^+)^2+\int_t^T\mathbbm{1}_{\{\Delta Y_s>0\}}|\Delta Z_s|^2ds
+\int_t^T\int_G\Big(((\Delta Y_{s}+\Delta K_s(e))^+)^2-((\Delta Y_{s})^+)^2\\
&\qquad\qquad\qquad\qquad\qquad\qquad\qquad\qquad\qquad\qquad\qquad\qquad\qquad
  -2(\Delta Y_{s})^+\Delta K_s(e)\Big)\lambda(de)ds\Big]\\
&\leq  E\Big[\int_t^T2\mathbbm{1}_{\{\Delta Y_s>0\}}\Delta Y_s
\Big\{\alpha_y(s)\Delta Y_s+\alpha_z(s)\Delta Z_s+\int_G\alpha_k(s)\Delta K_s(e)l(e)\lambda(de)
+ \widehat{E}[\widehat{\alpha}_\nu(s)\widehat{\Delta Y}_s]\Big\}ds\Big].
\end{aligned}
\end{equation}
On the other hand, from the boundness of $\alpha_z(s)$ and the basic inequality $2ab\leq 2a^2+\frac{1}{2}b^2$,  it follows
\begin{equation}\label{equ 3.19}
E\Big[\int_t^T2\mathbbm{1}_{\{\Delta Y_s>0\}}\Delta Y_s\alpha_z(s)\Delta Z_sds\Big]\\
\leq 2C^2E[\int_t^T((\Delta Y_s)^+)^2ds]+\frac{1}{2}E[\int_t^T\mathbbm{1}_{\{\Delta Y_s>0\}}|\Delta Z_s|^2ds ]
\end{equation}
and  moreover, from Jensen's inequality and the assumption $0<\partial_\nu f_1\leq C$ we obtain
\begin{equation}\label{equ 3.20}
\begin{aligned}
 &E\Big[\int_t^T2\mathbbm{1}_{\{\Delta Y_s>0\}}\Delta Y_s\widehat{E}[\widehat{\alpha}_\nu(s)\widehat{\Delta Y}_s]ds\Big]\\
 &\leq 2CE\int_t^T \mathbbm{1}_{\{\Delta Y_s>0\}}\Delta Y_sE[(\Delta Y _s)^+]
 \leq 2C E[\int_t^T((\Delta Y_s)^+)^2ds].
 \end{aligned}
\end{equation}
Combining (\ref{equ 3.18}), (\ref{equ 3.19}) and (\ref{equ 3.20}),  (\ref{equ 3.18}) can be rewritten as
\begin{equation}\label{equ 3.21}
\begin{aligned}
&E\Big[((\Delta Y_t)^+)^2+\frac{1}{2}\int_t^T\mathbbm{1}_{\{\Delta Y_s>0\}}|\Delta Z_s|^2ds
+\int_t^T\int_G\Big(((\Delta Y_{s}+\Delta K_s(e))^+)^2-((\Delta Y_{s})^+)^2\\
&\qquad\qquad\qquad\qquad\qquad\qquad\qquad\qquad\qquad\qquad\qquad\qquad\qquad
  -2(\Delta Y_{s})^+\Delta K_s(e)\Big)\lambda(de)ds\Big]\\
&\leq  (4C+2C^2)E[\int_t^T ((\Delta Y_{s})^+)^2 ds ]+
E\Big[\int_t^T\int_G2\mathbbm{1}_{\{\Delta Y_s>0\}}\Delta Y_s\Delta K_s(e)\alpha_k(s)l(e)\lambda(de)ds\Big].
\end{aligned}
\end{equation}
For convenience, we
denote $A=\{(s,\omega)|\Delta Y_s>0\}$ and $B=\{(s,\omega)|\Delta Y_{s}+\Delta K_s(e)>0\}$.
It is easy to check
\begin{equation}\label{equ 3.22}
\begin{aligned}
&E\Big[\int_t^T\int_G\Big(((\Delta Y_{s}+\Delta K_s(e))^+)^2-((\Delta Y_{s})^+)^2
  -2(\Delta Y_{s})^+\Delta K_s(e)\Big)\lambda(de)ds\Big]\\
\end{aligned}
\end{equation}
 $$
\begin{aligned}
&=E\Big[\int_t^T\int_G(\mathbbm{1}_{AB}+\mathbbm{1}_{A^cB})(\Delta Y_{s}+\Delta K_s(e))^2
-(\mathbbm{1}_{AB}+\mathbbm{1}_{AB^c})\Big((\Delta Y_{s})^2+2\Delta Y_{s}\Delta K_s(e)\Big)\lambda(de)ds\\
&\geq E\Big[\int_t^T\int_G\mathbbm{1}_{AB}|\Delta K_s(e)|^2-\mathbbm{1}_{AB^c}((\Delta Y_{s})^2+2\Delta Y_{s}\Delta K_s(e))
\lambda(de)ds.
\end{aligned}
$$
Combining  (\ref{equ 3.21}) with (\ref{equ 3.22}), it follows
\begin{equation}\label{equ 3.23}
\begin{aligned}
&E[((\Delta Y_t)^+)^2]+\frac{1}{2}E\Big[\int_t^T\mathbbm{1}_{A}|\Delta Z_s|^2ds\Big]
+E\Big[\int_t^T\int_G\mathbbm{1}_{AB}|\Delta K_s(e)|^2\lambda(de)ds\Big]\\
&\quad+E\Big[\int_t^T\int_G\mathbbm{1}_{AB^c}\Big(-(\Delta Y_{s})^2-2\Delta Y_{s}\Delta K_s(e)(1+\alpha_k(s)l(e))\Big)\lambda(de)ds\Big]\\
&\leq  (4C+2C^2)E[\int_t^T ((\Delta Y_{s})^+)^2 ds]
+E\Big[\int_t^T\int_G2\mathbbm{1}_{AB}\Delta Y_s\Delta K_s(e)\alpha_k(s)l(e)\lambda(de)ds\Big].
\end{aligned}
\end{equation}
According to the boundness assumption of $\alpha_k(s)$, H\"{o}lder inequality, the inequality $2ab\leq 2a^2+\frac{1}{2}b^2$,
we have
\begin{equation}\label{equ 3.24}
\begin{aligned}
&E[((\Delta Y_t)^+)^2]+\frac{1}{2}E[\int_t^T\mathbbm{1}_{A}|\Delta Z_s|^2ds]
+\frac{1}{2}E\Big[\int_t^T\int_G\mathbbm{1}_{AB}|\Delta K_s(e)|^2\lambda(de)ds\Big]\\
&+E\Big[\int_t^T\int_G\mathbbm{1}_{AB^c}\Big(-(\Delta Y_{s})^2-2\Delta Y_{s}\Delta K_s(e)(1+\alpha_k(s)l(e))\Big)\lambda(de)ds\Big]\\
&\leq  \Big(4C+2C^2+2C^2\int_G|l(e)|^2\lambda(de)\Big)E[\int_t^T ((\Delta Y_{s})^+)^2 ds].
\end{aligned}
\end{equation}

We argue that
$\Gamma:=E[\int_t^T\int_G\mathbbm{1}_{AB^c}\Big(-(\Delta Y_{s})^2-2\Delta Y_{s}\Delta K_s(e)(1+\alpha_k(s)l(e))\Big)\lambda(de)ds]\geq0$.\\
In fact, for each $e\in G$ and  for any $(s,\omega)\in AB^c$, we have $0<\Delta Y_s\leq -\Delta K_s(e)$.
The nonnegative assumptions on $\partial_kf_1$ and $l(\cdot)$ can imply $\Gamma\geq0$ easily.
Hence,\begin{equation}\label{equ 3.25}
\begin{aligned}
&E[((\Delta Y_t)^+)^2]
&\leq  \Big(4C+2C^2+2C^2\int_G|l(e)|^2\lambda(de)\Big)E[\int_t^T ((\Delta Y_{s})^+)^2 ds].
\end{aligned}
\end{equation}
(\ref{equ 3.25}) and Gronwall lemma could show the desired result.
\end{proof}\\

Let us state the second main result of this paper--Comparison Theorem. We make an extra assumption:\\
\textbf{(H3.3)}
Let $f_i, i=1,2$ be two drivers of (\ref{equ 3.14}) and satisfy:\\
(i) $f_2(t,y,z,k,\xi_r,\overline{\xi}_r,\nu)\geq f_2(t,y,z,k,\xi'_r,\overline{\xi}'_r,\nu),$
$(t,y,z,k,\nu)\in[0,T]\times \mathbb{R}\times \mathbb{R}^d\times \mathbb{R}\times \mathcal{P}_2(\mathbb{R}),$
if $\xi_r\geq\xi'_r,\overline{\xi}_r\geq\overline{\xi}'_r,\quad \xi_r, \xi'_r,\overline{\xi}_r, \overline{\xi}'_r\in \mathcal{H}^2_{\mathbb{F}}(t,T+M);$\\
(ii) There exists a constant $C>0$, such that the derivatives of $f_1$ with respect to $\nu$ and $k$ are positive and bounded by $C>0,$
i.e.,  $0<\partial_k f_1\leq C$ and $0<\partial_\nu f_1\leq C$.

\begin{theorem}\label{th 3.2}(Comparison Theorem)
Let the assumptions \textbf{(H3.1)}, \textbf{(H3.2)} and \textbf{(H3.3)} hold true and  let $\varphi^i\in S^2_{\mathbb{F}}(T,T+K),\ i=1,2.$
By $(Y^i,Z^i,K^i)$ we denote the solution of the equation (\ref{equ 3.14}) with data $(f_i, \varphi^i)$. If
 $\varphi^1_s\geq\varphi^2_s, s\in[T,T+K]$ and
 $f_1(s,y,z,k,\theta_r,\overline{\theta}_r,\nu)\geq f_2(s,y,z,k,\theta_r,\overline{\theta}_r,\nu),$
 for $s\in[0,T],$ $y\in \mathbb{R},$ $z\in\mathbb{R}^d$, $k\in \mathbb{R}$,
 $\theta_r,\overline{\theta}_r\in \mathcal{H}^2_{\mathbb{F}}(s,T+M),$  $\nu\in\mathcal{P}_2(\mathbb{R}),$
 $r\in[t,T+M],$ then $Y^1_t\geq Y^2_t, \ a.s., a.e.$
\end{theorem}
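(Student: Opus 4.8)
The plan is to deduce the anticipated comparison from the non-anticipated comparison Lemma~\ref{le 4.1} by running the Picard iteration that underlies Theorem~\ref{th 3.1}. For $i=1,2$ I would set $Y^{i,0}_t:=\varphi^i_t$ on $[T,T+M]$ and let $Y^{i,0}$ on $[0,T]$ be the first component of the solution of the mean-field BSDE with jumps with driver $(t,y,z,k,\nu)\mapsto f_i(t,y,z,k,0,0,\nu)$ and terminal value $\varphi^i_T$; then, inductively, I would let $(Y^{i,n+1},Z^{i,n+1},K^{i,n+1})$ on $[0,T]$ be the solution of the mean-field BSDE with jumps with driver
\[
g_i^n(t,y,z,k,\nu):=f_i\Bigl(t,y,z,k,Y^{i,n}_{t+\delta_1(t)},\int_0^{\delta_1(t)}e^{-\rho s}Y^{i,n}_{t+s}\,ds,\nu\Bigr)
\]
and terminal value $\varphi^i_T$, and set $Y^{i,n+1}_t:=\varphi^i_t$ on $[T,T+M]$. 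By \textbf{(H3.2)} and \textbf{(H3.1)}(ii) (using $Y^{i,n}\in\mathcal S^2_{\mathbb F}(0,T+M)$ and (\ref{equ 3.3})), each $g_i^n$ is a Lipschitz driver with $E\int_0^T|g_i^n(t,0,0,0,\delta_{\mathbf{0}})|^2\,dt<+\infty$, so each iterate is well defined; moreover this is precisely the Picard iteration associated with the contraction map from the proof of Theorem~\ref{th 3.1} specialized to equation (\ref{equ 3.14}), so $(Y^{i,n},Z^{i,n},K^{i,n})\to(Y^i,Z^i,K^i)$ in the $\beta$-norm and, along a subsequence, $dt\otimes dP$-a.e.

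I would then show by induction on $n$ that $Y^{1,n}_t\ge Y^{2,n}_t$ for all $t\in[0,T+M]$. On $[T,T+M]$ this is the hypothesis $\varphi^1\ge\varphi^2$; for $n=0$ on $[0,T]$ it follows from Lemma~\ref{le 4.1} applied to the drivers $f_i(\cdot,0,0,\cdot)$, since $f_1(t,y,z,k,0,0,\nu)\ge f_2(t,y,z,k,0,0,\nu)$ (the comparison assumption with the constant anticipated arguments $0\in\mathcal H^2_{\mathbb F}$), $0<\partial_k f_1\le C$ and $0<\partial_\nu f_1\le C$ by \textbf{(H3.3)}(ii), and $\varphi^1_T\ge\varphi^2_T$. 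For the inductive step, assuming $Y^{1,n}\ge Y^{2,n}$ on $[0,T+M]$, put $\theta^{i,n}_t:=Y^{i,n}_{t+\delta_1(t)}$ and $\bar\theta^{i,n}_t:=\int_0^{\delta_1(t)}e^{-\rho s}Y^{i,n}_{t+s}\,ds$; then for all $(t,y,z,k,\nu)$,
\begin{align*}
g_1^n(t,y,z,k,\nu)&=f_1\bigl(t,y,z,k,\theta^{1,n}_t,\bar\theta^{1,n}_t,\nu\bigr)\ge f_2\bigl(t,y,z,k,\theta^{1,n}_t,\bar\theta^{1,n}_t,\nu\bigr)\\
&\ge f_2\bigl(t,y,z,k,\theta^{2,n}_t,\bar\theta^{2,n}_t,\nu\bigr)=g_2^n(t,y,z,k,\nu),
\end{align*}
where the first inequality is the assumption $f_1\ge f_2$ (at the anticipated arguments $Y^{1,n}_\cdot\in\mathcal S^2_{\mathbb F}\subset\mathcal H^2_{\mathbb F}$) and the second is the monotonicity \textbf{(H3.3)}(i) of $f_2$ in the anticipated variables, valid since $\theta^{1,n}_t\ge\theta^{2,n}_t$ and $\bar\theta^{1,n}_t\ge\bar\theta^{2,n}_t$ by the induction hypothesis. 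As $\partial_k g_1^n=\partial_k f_1\in(0,C]$ and $\partial_\nu g_1^n=\partial_\nu f_1\in(0,C]$, Lemma~\ref{le 4.1} applies (with $g_1^n$ on the larger side) and gives $Y^{1,n+1}_t\ge Y^{2,n+1}_t$ on $[0,T]$; on $[T,T+M]$ this is again $\varphi^1\ge\varphi^2$. Letting $n\to\infty$ along the a.e.-convergent subsequence yields $Y^1_t\ge Y^2_t$, a.e., a.s.

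The delicate point is the two-step comparison of the frozen drivers: one must use $f_1\ge f_2$ to pass from $g_1^n$ to $f_2$ evaluated at the anticipated values of $Y^{1,n}$, and then the monotonicity of $f_2$ (and not of $f_1$, which is only assumed to have positive bounded $\partial_k$ and $\partial_\nu$) in the anticipated variables, together with the induction hypothesis, to descend to the anticipated values of $Y^{2,n}$ --- all while keeping the driver built from $f_1$ on the larger side so that the structural hypotheses of Lemma~\ref{le 4.1} remain in force. Everything else --- well-posedness and $\beta$-norm convergence of the iterates, and the base case --- is routine given Theorem~\ref{th 3.1} and Lemma~\ref{le 4.1}.
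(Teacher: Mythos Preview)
Your argument is correct and shares the paper's key ingredients (freeze the anticipated terms, apply Lemma~\ref{le 4.1} at each step, pass to the limit), but it is organised differently. The paper builds a \emph{single} descending chain: it starts from $Y^1$, defines $Y^3$ as the solution of the mean-field BSDE with driver $f_2$ but anticipated terms $A^1,\overline{A}^1$ taken from $Y^1$, then $Y^4$ with anticipated terms from $Y^3$, and so on, and shows $Y^1\ge Y^3\ge Y^4\ge\cdots\to Y^2$. You instead run \emph{two parallel} Picard iterations $Y^{1,n}$ and $Y^{2,n}$ and compare them stepwise. Your organisation has a genuine advantage: at every step the larger driver $g_1^n$ is built from $f_1$, so the structural hypotheses $0<\partial_k f_1\le C$, $0<\partial_\nu f_1\le C$ of Lemma~\ref{le 4.1} are automatically available; in the paper's chain, from $Y^3\ge Y^4$ onward both competing drivers come from $f_2$, so one tacitly needs the same positivity conditions for $f_2$, which \textbf{(H3.3)}(ii) does not actually state. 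One small inaccuracy to fix: your iteration is \emph{not} literally the contraction map of Theorem~\ref{th 3.1}, since that map freezes \emph{all} of $(y,z,k)$ (including the law $P_{\pi_t}$) and produces a classical BSDE at each step, whereas your $g_i^n$ still depends on $\nu$ and yields a mean-field BSDE. The convergence you need is, however, exactly the contraction estimate the paper itself proves inside Theorem~\ref{th 3.2} (the $\beta$-norm bound on successive differences with only the anticipated terms frozen), so just cite that computation rather than Theorem~\ref{th 3.1}.
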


\begin{proof}
For $i=1,2,3,\cdot\cdot\cdot$, we set $A_s^i=Y^i_{s+\delta_1(s)}, \overline{A}_s^i=\int_0^{\delta_1(s)}e^{-\rho u}Y^i_{s+u}du$.
Let $(Y^3_\cdot,Z^3_\cdot,K^3_\cdot)\in \mathcal{S}^2_{\mathbb{F}}(0,T)\times \mathcal{H}^2_{\mathbb{F}}(0,T;\mathbb{R}^d)
\times \mathcal{K}^2_\lambda(0,T)$ be the solution of the following  mean-filed BSDE with jumps:
\begin{equation}\label{equ 4.13}
\left\{
   \begin{aligned}
    Y^3_t&=\varphi_T^2+\int_t^Tf_2(s,Y^3_s,Z^3_s,\int_GK^3_s(e)l(e)\lambda(de),A^1_s,\overline{A}^1_s,P_{Y^3_s})ds
    -\int_t^TZ^3_sdW_s\\
    &\quad-\int_t^T\int_GK^3_s(e)N_\lambda(de,dt),\ t\in[0,T],\\
      Y^3_t&=\varphi^2_t,\ t\in[T,T+M].
    \end{aligned}
   \right.
   \end{equation}
From Lemma \ref{le 4.1}, it yields  $Y^1_t\geq Y^3_t,$ a.e., a.s.\\
We now consider
\begin{equation}\label{equ 4.14}
\left\{
   \begin{aligned}
    Y^4_t&=\varphi_T^2+\int_t^Tf_2(s,Y^4_s,Z^4_s,\int_GK^4_s(e)l(e)\lambda(de),A^3_s,\overline{A}^3_s,P_{Y^4_s})ds-\int_t^TZ^4_sdW_s\\
    &\quad-\int_t^T\int_GK^4_s(e)N_\lambda(de,dt),\ t\in[0,T],\\
      Y^3_t&=\varphi^2_t,\ t\in[T,T+M].
    \end{aligned}
   \right.
   \end{equation}
Since $f_2(s,y,z,k,\cdot,\cdot,\nu)$ is increasing, one can check  $Y^3_t\geq Y^4_t,$ a.e., a.s.
Repeating the above argument, we obtain  $Y^3_t\geq Y^n_t,$ a.e., a.s., where
$(Y^n_\cdot,Z^n_\cdot,K^n_\cdot)$ is the solution of mean-field BSDE with jumps: for $n\geq4$,
\begin{equation}\label{equ 4.15}
\left\{
   \begin{aligned}
    Y^n_t&=\varphi_T^2+\int_t^Tf_2(s,Y^n_s,Z^n_s,\int_GK^n_s(e)l(e)\lambda(de),A^{n-1}_s,\overline{A}^{n-1}_s,P_{Y^n_s})ds-\int_t^TZ^n_sdW_s\\
    &\quad-\int_t^T\int_GK^n_s(e)N_\lambda(de,dt),\ t\in[0,T],\\
      Y^n_t&=\varphi^2_t,\ t\in[T,T+M].
    \end{aligned}
   \right.
   \end{equation}
We shall prove the existence of the  limit of $(Y^n_\cdot,Z^n_\cdot,K^n_\cdot)$, and show that it is just
$(Y^2_\cdot,Z^2_\cdot,K^2_\cdot).$ For this purpose, we define for $n\geq1$,
$$
(\widetilde{Y}^n_\cdot,\widetilde{Z}^n_\cdot,\widetilde{K}^n_\cdot,\widetilde{A}^n_s,\widetilde{\bar{A}}^n_s)
:=(Y^n_\cdot-Y^{n-1}_\cdot,Z^n_\cdot-Z^{n-1}_\cdot,K^n_\cdot-K^{n-1}_\cdot,A^{n}_\cdot-A^{n-1}_\cdot,\overline{A}^{n}_\cdot-\overline{A}^{n-1}_\cdot).
$$
From the Lipschitz property of $f$ and the fact $W_2(P_\xi,P_\eta)\leq \{E|\xi-\eta|^2\}^\frac{1}{2},$
the It\^{o}'s formula allows to show, for $\beta>0,$
$$
\begin{aligned}
&E\Big[\int_0^Te^{\beta s}(\frac{\beta}{2}|\widetilde{Y}^n_s|^2+|\widetilde{Z}^n_s|^2+\int_G|\widetilde{K}^n_s(e)|^2\lambda(de))ds\Big]\\
&\leq \frac{2}{\beta}E\Big[
\int_0^Te^{\beta s}|f_2(s,Y^n_s,Z^n_s,\int_GK^n_s(e)l(e)\lambda(de),A^{n-1}_s,\overline{A}^{n-1}_s,P_{Y^n_s})\\
&\qquad\qquad\qquad-f_2(s,Y^{n-1}_s,Z^{n-1}_s,\int_GK^{n-1}_s(e)l(e)\lambda(de),A^{n-2}_s,\overline{A}^{n-2}_s,P_{Y^{n-1}_s})|^2ds\Big]\\
&\leq\frac{2C^2}{\beta}E\Big[\int_0^Te^{\beta s}\Big(
|\widetilde{Y}^n_s|+|\widetilde{Z}^n_s|+|\int_G\widetilde{K}^n_sl(e)\lambda(de)|+
E^{\mathcal{F}_s}[|\widetilde{A}^{n-1}_s|+|\widetilde{\bar{A}}^{n-1}_s|]+W_2(P_{Y^n_s}, P_{Y^{n-1}_s})
\Big)^2ds\Big]\\
&\leq\frac{12C^2}{\beta}E\Big[\int_0^T e^{\beta s}\Big(
|\widetilde{Y}^n_s|^2+|\widetilde{Z}^n_s|^2+|\int_G\widetilde{K}^n_sl(e)\lambda(de)|^2
+E^{\mathcal{F}_s}[|\widetilde{A}^{n-1}_s|^2+|\widetilde{\bar{A}}^{n-1}_s|^2]
+E[|\widetilde{Y}^n_s|^2]\Big)ds\Big].
\end{aligned}
$$
Recall the definitions of $\widetilde{A}^{n-1}_s,\ \widetilde{\bar{A}}^{n-1}_s$. On the one hand, one can check
$$
E\int_0^TE^{\mathcal{F}_s}[e^{\beta s}|\widetilde{Y}^{n-1}_{s+\delta_1(s)}|^2]ds\leq L E\int_0^Te^{\beta s}|\widetilde{Y}^{n-1}_{s}|^2ds.
$$
On the other hand, following the  similar argument of the proof of Theorem 3.1 (refer to (\ref{equ 3.10})-(\ref{equ 3.12})), it follows
$$
E\int_0^TE^{\mathcal{F}_s}[|\int_0^{\delta_1(s)}e^{-\rho u}\widetilde{Y}^{n-1}_{s+u}du|^2e^{\beta s}]ds
\leq \frac{1}{2\rho}(1-e^{-2\rho(T+M)})Te^{\beta T}E\int_0^Te^{\beta s}|\widetilde{Y}_s^{n-1}|^2ds.
$$
Consequently, we have
$$
\begin{aligned}
&E\Big[\int_0^Te^{\beta s}(\frac{\beta}{2}|\widetilde{Y}^n_s|^2+|\widetilde{Z}^n_s|^2+\int_G|\widetilde{K}^n_s(e)|^2\lambda(de))ds\Big]\\
&\leq\frac{12C^2}{\beta}\Big(2+\int_G(1\wedge|e|^2)\lambda(de)\Big)E\Big[\int_0^Te^{\beta s}(|\widetilde{Y}_s^{n}|^2ds+|\widetilde{Z}_s^{n}|^2
+\int_G|\widetilde{K}_s^{n}|^2\lambda(de))ds\Big]\\
&\quad+\frac{12C^2}{\beta}(L+\kappa_0)E\int_0^Te^{\beta s}|\widetilde{Y}_s^{n-1}|^2ds,
\end{aligned}
$$
where $\kappa_0=\frac{1}{2\rho}(1-e^{-2\rho(T+M)})Te^{\beta T}.$\\
Choosing $\beta=36C^2(2+\int_G(1\wedge|e|^2)\lambda(de)+L+\kappa_0)+3$, then
$$
\begin{aligned}
&E\Big[\int_0^Te^{\beta s}(|\widetilde{Y}^n_s|^2+|\widetilde{Z}^n_s|^2+\int_G|\widetilde{K}^n_s(e)|^2\lambda(de))ds\Big]\\
&\leq \frac{1}{2}E\Big[\int_0^Te^{\beta s}(|\widetilde{Y}^{n-1}_s|^2+|\widetilde{Z}^{n-1}_s|^2
+\int_G|\widetilde{K}^{n-1}_s(e)|^2\lambda(de))ds\Big].
\end{aligned}
$$
Therefore,
$$
\begin{aligned}
&E\Big[\int_0^Te^{\beta s}(|\widetilde{Y}^n_s|^2+|\widetilde{Z}^n_s|^2+\int_G|\widetilde{K}^n_s(e)|^2\lambda(de))ds\Big]\\
&\leq\frac{1}{2^{n-4}}E\Big[\int_0^Te^{\beta s}(|\widetilde{Y}^{4}_s|^2+|\widetilde{Z}^{4}_s|^2
+\int_G|\widetilde{K}^{4}_s(e)|^2\lambda(de))ds\Big],
\end{aligned}
$$
which implies that $(Y^n,Z^n,K^n)_{n\geq4}$ is a Cauchy sequence. By $(Y,Z,K)$ we denote
its limit. It is easy to get $(Y,Z)\in \mathcal{H}^2_{\mathbb{F}}(0,T;\mathbb{R}^{1+d})$
and $K\in \mathcal{K}^2_\lambda(0,T)$, and moreover, $Y^n_t\geq Y_t,$ a.e., a.s.\\
Taking limit in (\ref{equ 4.15}), one has
\begin{equation}\label{equ 4.16}
\left\{
   \begin{aligned}
    Y_t&=\varphi_T^2+\int_t^Tf_2(s,Y_s,Z_s,\int_GK_s(e)l_s(e)\lambda(de),A_s,\overline{A}_s,P_{Y_s})ds-\int_t^TZ_sdW_s\\
    &\quad-\int_t^T\int_GK_s(e)N_\lambda(de,dt),\ t\in[0,T],\\
      Y_t&=\varphi^2_t, t\in[T,T+M].
    \end{aligned}
   \right.
   \end{equation}
The existence and uniqueness of the solution of the anticipated mean-field BSDEs (see Theorem \ref{th 3.1})
allows to show $Y_t=Y^2_t,$ a.e., a.s. Then the desired result comes from the fact $Y^1_t\geq Y^3_t\geq\cdot\cdot\cdot\geq Y_t=Y^2_t,$
a.e., a.s.
\end{proof}

\renewcommand{\refname}{\large References}{\normalsize \ }


\begin{thebibliography}{99}

\bibitem{A}
N. Agram,
(2018)
\emph{Mean-field delayed BSDEs with Jumps},
http://cn.arxiv.org pdf/1801.033\\
64.



\bibitem{BDLP}
R. Buckdahn, B. Djehiche, J.  Li,  S. Peng,
(2009)
\emph{Mean-field backward stochastic differential equations: A limit approach},
Ann. Probab.,
\textbf{37}(4): 1524-1565.


\bibitem{BLP}
R. Buckdahn,  J. Li, S. Peng,
(2009)
\emph{Mean-field backward stochastic differential equations and related partial differential equations},
Stoch. Proc. Appl.,
\textbf{119}: 3133-3154.

\bibitem{BLPR}
R. Buckdahn, J.  Li, S.  Peng, C. Rainer,
(2017)
\emph{Mean-field stochastic differential equations and associated PDEs},
Ann. Probab.,
\textbf{45}: 824-874.

\bibitem{Car}
P. Cardaliaguet,
(2013)
\emph{Notes on Mean Field Games (from P.-L. Lions' lectures at Coll\`{e}ge de France)},
https://www.ceremade.dauphine.fr/~cardalia.



\bibitem{CD}
R. Carmona, F. Delarue,
(2015)
\emph{ Forward-backward stochastic differential equations and controlled McKean-Vlasov dynamics},
Ann. Probab.,
\textbf{43}: 2647-2700.




\bibitem{CH}
L. Chen, J. Huang,
(2015)
\emph{ Stochstic maximum principle for controlled backward delayed system via advanced stochastic differential equation},
J. Optim. Theory Appl.,
\textbf{167}: 1112-1135.




\bibitem{CW}
L. Chen, Z. Wu,
(2010)
\emph{Maximum principle for the stochastic optimal control problem with delay and applications},
Automatica,
\textbf{46}: 1074-1080.

\bibitem{EOS}
I. Elsanosi, B. $\phi$ksendal, A. Sulem,
(2000)
\emph{Some solvable stochastic control problems with delay},
Stochastics,
\textbf{71}: 69-89.


\bibitem{FMT}
M. Fuhrman, F. Masiero, G. Tessitore,
(2010)
\emph{Stochastic equations with delay: optimal control via BSDEs and regular solutions of Hamilton-Jacobi-Bellman equations},
SIAM J. Control Optim.,
\textbf{48}(7): 4624-4651.

%
\bibitem{GuM} G. Guatteri, F. Masiero,
(2018)
\emph{Stochastic maximum principle for equations with delay: the non-convex case},
http://cn.arxiv.org/pdf/1805.07957.


\bibitem{GoM1}
F. Gozzi, F. Masiero
(2017)
\emph{Stochastic optimal control with delay in the control I: solving the HJB equation through partial smoothing},
SIAM J. Control Optim.,
\textbf{55}(5): 2981-3012.


\bibitem{GoM2}
F. Gozzi, F. Masiero
(2017)
\emph{Stochastic optimal control with delay in the control II: verification theorem and optimal feedbacks},
SIAM J. Control Optim.,
\textbf{55}(5): 3013-3038.

\bibitem{HL} T. Hao, J. Li,
(2017)
\emph{Mean-field SDEs with jumps and nonlocal integral-PDEs},
Nonlinear Differ. Equ. Appl.,
\textbf{23}(2): 1-51.

\bibitem{Li}
J. Li,
\emph{Mean-field forward and backward SDEs with jumps. Associated nonlocal quasi-linear integral-PDEs},
Stoch. Proc.  Appl.,
\textbf{128}(9): 3118-3180.


\bibitem{Lions}
P.L. Lions,
(2013)
\emph {Cours au Coll\`{e}ge de France : Th\'eorie des jeu \`{a} champs moyens},
http://www.college-de-france.fr/default/EN/all/equ[1]der/audiovideo.jsp.

\bibitem{LR}
W. Lu, Y. Ren,
(2013)
\emph{Anticipated backward stochastic differential equations on Markov chains},
Stat. Probab. Lett.,
\textbf{83}: 1711-1719.



\bibitem{PY}
S. Peng, Z. Yang,
(2009)
\emph{Anticipated backward stochastic differential equations},
 Ann. Probab.,
 \textbf{37}(3): 877-902.




\bibitem{YE}
Z. Yang, R.J. Elliott
(2013)
\emph{Some properties of generalized anticipated backward stochastic differential equations},
Electron. Commun. Probab.,
\textbf{18}(63): 1-10.



\end{thebibliography}
\end{document}